%% Plantilla

\documentclass[preprint,1p,times]{elsarticle}

\usepackage[utf8]{inputenc}

\usepackage[english]{babel}

\usepackage{amsmath,amsfonts,amssymb,amsthm}

%\journal{Journal of Mathematical Analysis and Applications}

\theoremstyle{definition}
\newtheorem{theorem}{Theorem}
\newtheorem{lemma}[theorem]{Lemma}
\newtheorem{proposition}[theorem]{Proposition}
\newtheorem{corollary}[theorem]{Corollary}

\theoremstyle{definition}
\newtheorem{definition}[theorem]{Definition}

\newtheorem{problem}{Problem}

\theoremstyle{remark}
\newtheorem{remark}[theorem]{Remark}

%% Algunos comandos

\newcommand{\N}{\mathbb{N}} %% Conjunto naturales:     \N
\newcommand{\Z}{\mathbb{Z}} %% Conjunto enteros:       \Z
\newcommand{\R}{\mathbb{R}} %% Conjunto reales:        \R
\newcommand{\C}{\mathbb{C}} %% Conjunto complejos:     \C
\newcommand{\D}{\mathbb{D}} %% Disco unidad:           \D
\newcommand{\T}{\mathbb{T}} %% Circ. unidad:           \D
\newcommand{\K}{\mathbb{K}} %% Field                   \K

%% Letra epsilon para distancias: \eps
\newcommand{\eps}{\varepsilon}

% Absolute value notation
\newcommand{\abs}[1]{\left\lvert#1\right\rvert}

% Norm notation
\newcommand{\norm}[1]{\lVert#1\rVert}

\newcommand{\orb}{\operatorname{Orb}} % Orbit command
\newcommand{\card}{\operatorname{card}}
\newcommand{\ldens}{\operatorname{\underline{dens}}}
\newcommand{\udens}{\operatorname{\overline{dens}}}
\newcommand{\rank}{\operatorname{rank}}
\newcommand{\spa}{\operatorname{span}}      %% Func. env. lineal:        \spa

\begin{document}

\begin{frontmatter}

% Title, authors and addresses

\title{Li-Yorke and distributionally chaotic operators\footnote{Dedicated to Professor Antonio Martin\'on  on his
 60th birthday}}

\author[LLG]{T. Berm\'{u}dez}
\ead{tbermude@ull.es}

\author[LLG]{A. Bonilla}
\ead{abonilla@ull.es}

\author[VLC]{F. Mart\'{\i}nez-Gim\'{e}nez}
\ead{fmartinez@mat.upv.es}

\author[VLC]{A. Peris}
\ead{aperis@mat.upv.es}

\address[LLG]{Departamento de An\'{a}lisis Matem\'{a}tico, Universidad de La Laguna, 38271, La Laguna (Tenerife), Spain}

\address[VLC]{IUMPA, Universitat Polit\`{e}cnica de Val\`{e}ncia, Departament de Matem\`{a}tica Aplicada, Edifici 7A,  46022 Val\`{e}ncia, Spain.}

\begin{abstract}
We study Li-Yorke chaos and distributional chaos for operators on Banach spaces. More precisely, we characterize Li-Yorke
chaos in terms of the existence of irregular vectors. Sufficient ``computable'' criteria for distributional and Li-Yorke chaos
are given, together with the existence of dense scrambled sets under some additional conditions. We also obtain certain
spectral properties. Finally, we show that every infinite dimensional separable Banach space admits a distributionally
chaotic operator which is also hypercyclic.
\end{abstract}

\begin{keyword}
Li-Yorke chaos \sep
distributional chaos \sep
irregular vector \sep
distributionally irregular vectors \sep
weighted shift operators

\MSC 47A16
\end{keyword}

\end{frontmatter}

\section{Introduction}

During the last years many  researchers paid attention to the ``wild behaviour'' of orbits governed
by linear operators on infinite dimensional spaces (more especially, on Banach or Fr\'{e}chet spaces). One of the most
significant cases being the \emph{hypercyclicity}, that is, the existence of vectors $x\in X$ such that the
orbit $\orb (T,x):=\{ x,Tx,T^2x,\dots \}$ under a (continuous and linear) operator
$T:X\to X$ on a topological vector space (usually,
Banach or Fr\'{e}chet space) $X$, is dense in $X$. We refer the reader
 to the recent books \cite{BM} and \cite{Grosse-ErdmannPeris10book} for a
 thorough account of the subject. This notion from operator theory joined chaos
 after the definition of Devaney \cite{devaney1989an}, which
 (in our context) requires hypercyclicity and density of the set of periodic points of $T$ in $X$.

The concept of ``chaos'' appeared for the first time in the Mathematical literature in the
paper of Li and Yorke \cite{LY} of mid 70's.

\begin{definition}\label{def_ly}
Let  $(X,d)$ be a metric space. A continuous map $f: X\rightarrow X$
is called {\em Li-Yorke chaotic} if there exists an uncountable
subset $\Gamma\subset X$ such that for every  pair $x,y\in\Gamma$ of
distinct points we have
\[
 \liminf_n d(f^nx,f^ny)=0
 \mbox{ and }
 \limsup_n d(f^nx,f^ny)>0.
\]
In this case, $\Gamma$ is a {\em scrambled} set and $\{x,y\}\subset\Gamma$ a {\em Li-Yorke} pair.
\end{definition}

Li-Yorke chaos was studied for linear operators, for instance, in \cite{DFLM99,FD99}. Incidentally,
every hypercyclic operator $T:X\to X$ on a Fr\'{e}chet space $X$ is Li-Yorke chaotic with respect to
any (continuous) translation invariant metric $d$ on $X$: E.g., fix a hypercyclic vector $x\in X$ and
consider the segment $\Gamma:= \{ \lambda x \ ; \ \abs{\lambda}\leq 1\}$, which is a scrambled set
for $T$.

The notion of distributional chaos was introduced by Schweizer and Smital in \cite{SS04}.
The definition was stated for interval maps with the intention to unify different notions of
chaos for continuous maps on intervals. This concept was widely studied by several authors and
can be formulated in any metric space.  In particular, it is considered for linear operators
defined on Banach or Fr\'{e}chet spaces in \cite{O06,MOP09,HCC09,HTS09,TSZH09}.

For any pair $\{x,y\} \subset X$ and for each $n\in \N$,  the  {\em distributional function} %
\(
 F_{xy}^{n}:\R^+ \rightarrow [0,1]
\) %
is defined by
\[
 F_{xy}^{n}(\tau) = \frac{1}{n}\card  \{ 0\le i\le n-1: d(f^{i}x,f^{i}y)<\tau \}
\]
where $\card\{A\}$ denotes  the cardinality of the set $A$. Define
\begin{align*}
 F_{xy}(\tau)
 &= \liminf_{n\rightarrow \infty} F_{xy}^{n}(\tau)
 \\
 F_{xy}^{*}(\tau)
 &=
 \limsup_{n\rightarrow \infty} F_{xy}^{n}(\tau).
\end{align*}

\begin{definition}[\cite{SS04,O09}] \label{def_dc}
A continuous map $f: X\rightarrow X$ on a metric space $X$ is {\em distributionally chaotic}
if there exist an uncountable subset $\Gamma\subset X$ and $\varepsilon >0$ such that
for every $\tau>0$ and each pair of distinct points
$x,y\in\Gamma$, we have that  $F_{xy}^{*}(\tau) =1$ and $ F_{xy}(\varepsilon)=0$.
The set $\Gamma$ is a {\em distributionally $\varepsilon$-scrambled} set and the pair $\{x,y \}$ a
{\em distributionally chaotic pair}.
Moreover, $f$ exhibits {\em dense distributional chaos} if the
set $\Gamma $ may be chosen to be dense.
\end{definition}

Generally speaking, for any distinct  $x,y\in \Gamma$ the iterations of these points
are arbitrary close and $\varepsilon $ separated alternatively, but additionally there
are time intervals where any of these excluding possibilities is much more frequent than the other.

Given $A\subset\N$, its \emph{upper and lower density} are
defined by
\[
 \udens (A)
 =
 \limsup_n \frac{\card  \{ A\cap [1,n]\}}{n},
 \mbox{ and }
 \ldens (A)
 =
 \liminf_n \frac{\card  \{ A\cap [1,n]\}}{n},
\]
respectively. With these concepts in mind, one can equivalently say
that $f$ is distributionally chaotic on $\Gamma$ if there exists
$\eps>0$ such that for any $x,y\in \Gamma$, $x\neq y$, we have
\[
 \ldens \{ n\in \N \ ; \ d(f^nx,f^ny)<\eps \}=0,
 \mbox{ and }
 \udens \{ n\in \N \ ; \ d(f^nx,f^ny)<\tau \}=1,
\]
for every $\tau >0$.

From now on $X$ will be a Banach space and $T:X\rightarrow X$ a
bounded operator. In this case the associated distance is
$d(x,y)=\norm{x-y}$, with  $x,y\in X$, where $\norm{{\cdot}}$ is the
norm of $X$. We recall the following concept from operator theory.

\begin{definition}[Beauzamy \cite{beau}] \label{def_iv}
A vector $x\in X$ is said to be \emph{irregular} for $T$ if
$\lim \inf_n\norm{T^nx} =0$ and $\lim \sup_n\norm{T^nx}=\infty$.
\end{definition}

Inspired by this definition, and by the notion of distributional chaos,
we consider the following stronger property.

\begin{definition}\label{def_div}
A vector $x\in X$ is said to be \emph{distributionally
irregular} for $T$ if there are increasing sequences of
integers $A=(n_k)_k$ and $B=(m_k)_k$ such that
$\udens(A)=\udens(B)=1$, $\lim_k\norm{T^{n_k}x} =0$ and $\lim_k
\norm{T^{m_k}x}=\infty$.
\end{definition}

\section{Li-Yorke chaotic operators}

We first discuss Li-Yorke chaos for operators with the following
result that establishes the equivalence between Li-Yorke chaos and
the existence of an irregular vector.

\begin{theorem}\label{t_ly_irr}
Let $T:X \to X$ be an operator. The following assertions are
equivalent:
\begin{enumerate}
\item[(i)] $T$ is Li-Yorke chaotic.
\item[(ii)] $T$ admits a Li-Yorke pair.
\item[(iii)] $T$ admits an irregular vector.
\end{enumerate}
\end{theorem}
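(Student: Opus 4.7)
The implication (i)$\Rightarrow$(ii) is immediate from the definition, and (iii)$\Rightarrow$(i) is a scaling argument: given an irregular vector $x$, the set $\Gamma := \{\lambda x : \lambda\in[0,1]\}$ is an uncountable scrambled set, since for distinct $\lambda,\mu\in[0,1]$ one has $\|T^n(\lambda x-\mu x)\|=|\lambda-\mu|\,\|T^n x\|$, which inherits $\liminf_n=0$ and $\limsup_n=\infty$ from $x$. The main content is therefore the implication (ii)$\Rightarrow$(iii), and my plan is as follows.

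Given a Li-Yorke pair $\{x,y\}$, set $z:=x-y\neq 0$, so that $\liminf_n\|T^n z\|=0$ and $c:=\limsup_n\|T^n z\|>0$. If $c=\infty$ then $z$ itself is irregular, so I assume that the orbit of $z$ is bounded. A key preliminary observation is that $T$ cannot then be power bounded: if $\sup_n\|T^n\|\leq C<\infty$, choosing $n_0$ with $\|T^{n_0}z\|<\eps/C$ would force $\|T^m z\|=\|T^{m-n_0}T^{n_0}z\|\leq\eps$ for every $m\geq n_0$, so $\lim_n\|T^n z\|=0$, contradicting $c>0$.

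To produce the irregular vector, I would restrict to the closed $T$-invariant subspace $M:=\overline{\spa\{T^k z : k\geq 0\}}$. Applied to the restriction $T|_M$ and the vector $z\in M$, the same power-boundedness argument shows that $T|_M$ is not power bounded on $M$, hence by the Banach--Steinhaus and Baire category theorems the $G_\delta$ set $M_\infty := \{v\in M : \limsup_n\|T^n v\|=\infty\}$ is comeager in $M$. On the other hand, the $G_\delta$ set $M_0 := \{v\in M : \liminf_n\|T^n v\|=0\}$ contains the dense subspace $\spa\{T^k z : k\geq 0\}$: for $v=\sum_{k=0}^N \alpha_k T^k z$ and $(n_j)$ with $T^{n_j}z\to 0$, continuity of each $T^k$ yields $T^{n_j}v=\sum_{k}\alpha_k T^k(T^{n_j}z)\to 0$. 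Thus $M_0$ is dense $G_\delta$ in $M$, and Baire produces $w\in M_0\cap M_\infty$, which is then an irregular vector for $T$. The principal obstacle I anticipate is precisely this last density step: in the full space $X$ there is no apparent reason for the set of vectors $v$ with $\liminf_n\|T^n v\|=0$ to be dense, but restricting to the cyclic subspace generated by $z$ makes the required denseness automatic.
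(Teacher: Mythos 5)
Your proposal is correct, but it proves the key implication (ii)$\Rightarrow$(iii) by a genuinely different route than the paper. The paper's proof is constructive: from $z=x-y$ it extracts a subsequence $(n_k)_k$ along which $T^{n_k}z\to 0$ fast enough and builds the irregular vector explicitly as a series $u=\sum_j \frac{1}{4^j\norm{T}^{n_{2j-1}}\norm{T^{n_{2j}}z}}T^{n_{2j}}z$, verifying $\limsup_n\norm{T^nu}=\infty$ and $\liminf_n\norm{T^nu}=0$ by hand via carefully imposed growth conditions; this ``gliding hump'' technique is then reused almost verbatim in the proofs of Theorem~\ref{ly_lyc} and Proposition~\ref{dcc_div}. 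You instead pass to the cyclic subspace $M=\overline{\spa\{T^kz: k\geq 0\}}$, observe (correctly) that $T|_M$ cannot be power bounded, and invoke Banach--Steinhaus plus Baire: the set of $v\in M$ with unbounded orbit is a comeager $G_\delta$, while the set with $\liminf_n\norm{T^nv}=0$ is a $G_\delta$ containing the dense subspace $\spa\{T^kz : k\geq 0\}$ (your continuity argument for finite linear combinations is exactly what makes the restriction to $M$ work, and the intersection automatically avoids $0$ since the zero vector has bounded orbit). Both arguments are valid; yours is shorter, non-constructive, and in fact yields slightly more, namely that the irregular vectors form a residual subset of $M$, which foreshadows the ``dense irregular manifold'' results later in the paper, whereas the paper's explicit construction provides the quantitative template on which its subsequent criteria (LYCC, DCC) are built.
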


\begin{proof}
(i) implies (ii) is clear; in order to prove (ii) implies (iii)
suppose that $T$ admits  $y,z\in X$ with
\[
 \liminf_n d(T^ny,T^nz)=0 \mbox{ and } \limsup_n d(T^ny,T^nz)>0.
\]
If we set $x=y-z$ then $\liminf_n\norm{T^nx}=0$ and there is
$\delta>0$ such that $\limsup_n\norm{T^nx}>\delta$. If
$\limsup_n\norm{T^nx}=\infty$ then $x$ is an irregular vector.
Otherwise, $M:=\limsup_n\norm{T^nx}<\infty$.

We observe that $\norm{T}>1$ since, given $n,m\in \N$, $n<m$, with
$\norm{T^nx}<\delta /2$ and $\norm{T^mx}>\delta$, we have that
$\norm{T}^{m-n}>2$. We select a strictly increasing sequence of
integers $(n_k)_k$ such that the sequence of vectors $(T^{n_k}x)_k$
tends to $0$ fast enough so that
\begin{align}
 &\norm{T^{n_{2k}}x}<4^{-k},\notag\\
 & \sum_{j=0}^k \frac{\norm{T^{n_{2j}+n_{2k+1}}x}}{4^j\norm{T}^{n_{2j-1}}\norm{T^{n_{2j}}x}}
 <\norm{T^{n_{2k}}x}, \mbox{ and }\label{eq_ly1}\\
 &\frac{\delta}{4^k\norm{T}^{n_{2k-1}}\norm{T^{n_{2k}}x}} -
 \sum_{j=0}^{k-1} \frac{M}{4^j\norm{T}^{n_{2j-1}}\norm{T^{n_{2j}}x}}
 >k, \label{eq_ly2}
\end{align}
for all $k\in \N$, where $n_{-1}=n_0:=0$. We now set
\[
 u=\sum_{j=0}^\infty \frac{1}{4^j\norm{T}^{n_{2j-1}}\norm{T^{n_{2j}}x}} T^{n_{2j}}x.
\]
We will see that $u$ is an irregular vector for $T$. Indeed,
\begin{align*}
 \norm{T^{n_{2k+1}}u} &\le
 \sum_{j=0}^\infty \frac{\norm{T^{n_{2j}+n_{2k+1}}x}}{4^j\norm{T}^{n_{2j-1}}\norm{T^{n_{2j}}x}}
 \\
 &<\norm{T^{n_{2k}}x}+
 \sum_{j=k+1}^\infty
 \frac{\norm{T^{n_{2k+1}}}\norm{T^{n_{2j}}x}}{4^j\norm{T}^{n_{2j-1}}\norm{T^{n_{2j}}x}}
 \\
 &<4^{-k}+
 \sum_{j=k+1}^\infty \frac{\norm{T}^{n_{2k+1}}}{4^j\norm{T}^{n_{2j-1}}}<2^{-k},
\end{align*}
by \eqref{eq_ly1} and the selection of $T^{n_{2k}}x$, $k \in \N$. On
the other hand, let $(m_k)_k$ be a strictly increasing sequence of
integers such that $\norm{T^{m_k}x}>\delta$, $k\in \N$. Without loss
of generality we suppose that $n_1<m_1<n_2<m_2<\dots$ with
$m_{2k}-n_{2k} $ tending to infinity. By \eqref{eq_ly2} we obtain
that
\begin{align*}
 \norm{T^{m_{2k}-n_{2k}}u}
 &\ge
 \frac{\norm{T^{m_{2k}}x}}{4^k\norm{T}^{n_{2k-1}}\norm{T^{n_{2k}}x}} -
 \sum_{j\neq k} \frac{\norm{T^{m_{2k}-n_{2k}+n_{2j}}x}}{4^j\norm{T}^{n_{2j-1}}\norm{T^{n_{2j}}x}}
 \\
 &>
 \frac{\delta}{4^k\norm{T}^{n_{2k-1}}\norm{T^{n_{2k}}x}}-
 \sum_{j=0}^{k-1} \frac{M}{4^j\norm{T}^{n_{2j-1}}\norm{T^{n_{2j}}x}} -
 \sum_{j=k+1}^\infty \frac{\norm{T}^{m_{2k}-n_{2k}}}{4^j\norm{T}^{n_{2j-1}}}
 \\
 &>
 k-\sum_{j=k+1}^\infty \frac{1}{4^j} > k-4^{-k},
\end{align*}
for all $k\in \N$. Thus $\limsup_n\norm{T^nu}=\infty$ and $u$ is an
irregular vector.

Finally, to see (iii) implies (i), if $u\in X$ is an irregular vector for $T$, then
$S:=\spa \{ u\}$ is a scrambled set for $T$ by a direct application
of the definitions.
\end{proof}

Pr\v{a}jitur\v{a} studied in \cite{praj} some  properties of
operators having irregular vectors.  By \cite{praj} and
Theorem~\ref{t_ly_irr},  we obtain certain properties of Li-Yorke
chaotic operators.

\begin{corollary}\label{ly_spect}
Let $T:X\to X$ be a Li-Yorke chaotic  operator. The following
assertions hold:
\begin{enumerate}
\item
$\sigma(T)\cap\partial \D \ne \emptyset $.
\item
$T^n$ is Li-Yorke chaotic for every $n\in \N$.
\item
$T$ is not compact.
\item $T$ is not normal.
\end{enumerate}
\end{corollary}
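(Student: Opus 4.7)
The plan is to invoke Theorem~\ref{t_ly_irr} to replace the hypothesis ``$T$ is Li-Yorke chaotic'' by the existence of an irregular vector $x\in X$ for $T$, and then to deduce each of the four assertions from this single datum.

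For (1), I would argue by contradiction. If $\sigma(T)\cap\partial\D=\emptyset$, the Riesz functional calculus yields an invariant splitting $X=X_1\oplus X_2$ with $\sigma(T|_{X_1})\subset\D$ and $\sigma(T|_{X_2})\subset\{|z|>1\}$. The spectral radius formula gives $\|T^n|_{X_1}\|\to 0$, while $T|_{X_2}$ is invertible with spectral radius of the inverse strictly less than $1$, whence $\|T^n z\|\to\infty$ for every nonzero $z\in X_2$. Decomposing $x=x_1+x_2$ and treating separately the cases $x_2=0$ and $x_2\neq 0$ is then incompatible with the simultaneous $\liminf\|T^n x\|=0$ and $\limsup\|T^n x\|=\infty$.

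For (2), I would show that $x$ itself is irregular for $T^n$. Setting $C:=\max_{0\le j<n}\|T^j\|$, a subsequence $(k_l)$ with $\|T^{k_l}x\|\to\infty$ can be shifted backwards by $j_l:=k_l\bmod n$, producing multiples $np_l$ of $n$ with $\|T^{np_l}x\|\ge\|T^{k_l}x\|/C\to\infty$; dually, a subsequence $(k'_l)$ with $\|T^{k'_l}x\|\to 0$ is shifted forwards by $(-k'_l)\bmod n$ into multiples $nm_l$ with $\|T^{nm_l}x\|\le C\|T^{k'_l}x\|\to 0$. A final appeal to Theorem~\ref{t_ly_irr} applied to $T^n$ then concludes.

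For (3), if $T$ were compact then by (1) the spectrum would meet $\partial\D$, and Riesz theory would split off the finitely many eigenvalues $\lambda_i$ with $|\lambda_i|\ge 1$, each of whose generalized eigenspaces $E_i$ is finite-dimensional, leaving a complement $X_0$ on which the spectral radius is strictly less than one. Expanding $x=x_0+\sum x_i$ accordingly, a nonzero component with $|\lambda_i|>1$ would force exponential growth of $\|T^n x\|$, contradicting $\liminf=0$. The main obstacle is the case $|\lambda_i|=1$: a finite-dimensional Jordan analysis shows that a proper generalized eigenvector component grows polynomially (again contradicting $\liminf=0$), whereas if every component on the unit circle is an honest eigenvector, then $\|T^n x\|$ stays bounded, contradicting $\limsup=\infty$.

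For (4), working on a Hilbert space, the spectral theorem for the normal operator $T$ provides a scalar measure $\mu_x$ with $\|T^n x\|^2=\int|z|^{2n}\,d\mu_x(z)$. Splitting the integration domain by $|z|<1$, $|z|=1$, $|z|>1$ and invoking dominated convergence, constancy, and monotone convergence respectively yields $\lim_n\|T^n x\|^2\in[0,\infty]$. Thus $\liminf$ and $\limsup$ coincide, which is incompatible with irregularity.
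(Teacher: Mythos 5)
Your argument is correct, and its first step coincides with the paper's: both reduce, via Theorem~\ref{t_ly_irr}, to the existence of a single irregular vector $x$. Where you diverge is in what comes next. The paper does not prove the four assertions at all; it simply observes that they are properties of operators admitting irregular vectors established by Pr\v{a}jitur\v{a} in \cite{praj}, so the whole corollary is ``\cite{praj} plus Theorem~\ref{t_ly_irr}''. You instead supply self-contained spectral arguments: Riesz decomposition across the unit circle for (1), a modular-arithmetic shift of the orbit by at most $n-1$ steps (with the uniform bound $C=\max_{0\le j<n}\norm{T^j}$) to transfer irregularity from $T$ to $T^n$ for (2), Riesz theory for compact operators plus a finite-dimensional Jordan analysis of the peripheral eigenvalues for (3), and the spectral theorem with the three-way split $\abs{z}<1$, $\abs{z}=1$, $\abs{z}>1$ of the scalar measure for (4), which shows $\lim_n\norm{T^nx}$ always exists in $[0,\infty]$ for a normal operator. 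All four arguments go through; the only points worth making explicit are that (1) and (3) use the holomorphic functional calculus, so for a real Banach space one should pass to the complexification (where $x\oplus 0$ is again irregular), and that (4) tacitly places $X$ in a Hilbert space, which is the natural setting since normality is only defined there and matches the Hilbert-space framework of \cite{praj}. What your route buys is independence from \cite{praj} and an explicit, quantitative picture of why irregular vectors are incompatible with each structural assumption; what the paper's route buys is brevity and a pointer to the systematic study of irregular vectors, of which these four items are instances.
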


\begin{definition}\label{LYC}
An operator $T:X\to X$ satisfies the  \emph{ Li-Yorke Chaos
Criterion} (LYCC) if there exist an increasing sequence of integers
$(n_k)_k$ and a subset $X_0\subset X$ such that
\begin{enumerate}
\item [(a)]
$\displaystyle\lim_{k\rightarrow \infty} T^{n_k}x=0$, $x\in X_0$,

\item [(b)]
$\displaystyle\sup_n \norm{T^n|_Y}=\infty$, where
$Y:=\overline{\spa (X_0)}$ and $T^n|_Y$ denotes the
restriction operator of $T^n$ to $Y$.
\end{enumerate}
\end{definition}

One might expect that the LYCC is a sufficient condition for
Li-Yorke chaos. It turns out that it is in fact a characterization
of this phenomenon.

\begin{theorem}\label{ly_lyc}
Let $T:X\to X$ be an operator. The following are equivalent:
\begin{enumerate}
\item[(i)] $T$ is Li-Yorke chaotic.
\item[(ii)] $T$ satisfies the  Li-Yorke Chaos Criterion.
\end{enumerate}
\end{theorem}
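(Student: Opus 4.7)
My plan is to prove both implications, with the substantive work in (ii) $\Rightarrow$ (i).

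For (i) $\Rightarrow$ (ii), I would simply appeal to Theorem~\ref{t_ly_irr} to obtain an irregular vector $x\in X$. With $X_0:=\{x\}$ and any subsequence $(n_k)$ along which $T^{n_k}x\to 0$ (which exists because $\liminf_n\norm{T^nx}=0$), condition (a) is immediate. The space $Y=\overline{\spa(X_0)}$ is one-dimensional, so $\norm{T^n|_Y}=\norm{T^nx}/\norm{x}$, and condition (b) reduces to the already-known $\limsup_n\norm{T^nx}=\infty$.

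For (ii) $\Rightarrow$ (i), by Theorem~\ref{t_ly_irr} it suffices to produce an irregular vector for $T$, and I plan to find one inside $Y$ via the Baire category theorem on the Banach space $Y$. Define
\[
Z=\{y\in Y\ ;\ \liminf_n\norm{T^ny}=0\},\qquad W=\{y\in Y\ ;\ \sup_n\norm{T^ny}=\infty\};
\]
any $u\in Z\cap W$ is an irregular vector for $T$, so it is enough to show that both $Z$ and $W$ are residual in $Y$. The set $Z$ can be written as $\bigcap_{m,N\geq 1}U_{m,N}$, where $U_{m,N}:=\bigcup_{n\geq N}\{y\in Y:\norm{T^ny}<1/m\}$ is open. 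Condition (a) and linearity give $T^{n_k}y\to 0$ for every $y\in\spa(X_0)$, hence $\spa(X_0)\subset U_{m,N}$ for all $m,N$; since $\spa(X_0)$ is dense in $Y$, each $U_{m,N}$ is dense and $Z$ is a dense $G_\delta$. For $W$, the sequence of bounded linear maps $T^n|_Y:Y\to X$ satisfies $\sup_n\norm{T^n|_Y}=\infty$ by (b), so the uniform boundedness principle forces the complement $Y\setminus W=\bigcup_k\{y\in Y:\sup_n\norm{T^ny}\leq k\}$ (a countable union of closed sets) to have empty interior and therefore to be meager; hence $W$ is residual.

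The main point—and the only real obstacle—is conceptual: one must see that condition (b) is formulated on the restriction to $Y$ precisely so that Banach--Steinhaus may be run on a complete space, while (a) furnishes not merely a non-empty $Z$ but a dense linear subspace contained in $Z$, which is exactly what is needed to upgrade $Z$ from non-empty to residual. Once both $Z$ and $W$ are residual, Baire gives $Z\cap W\neq\emptyset$, the resulting vector is irregular, and Theorem~\ref{t_ly_irr} closes the argument.
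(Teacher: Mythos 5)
Your proof is correct, but the route for (ii) $\Rightarrow$ (i) is genuinely different from the paper's. The paper argues constructively: after disposing of the trivial case where some $x\in X_0$ is already irregular, it uses (b) to select a sequence of normalized vectors $u_j\in\spa(X_0)$ with $\norm{T^{n_k}u_j}<1/j$ for $j\le k$ and $\norm{T^{m_j}u_j}>3^jM_{j-1}$, and then assembles an explicit irregular vector as a lacunary series $u=\sum_{i\in I}2^{-i}u_i$, estimating $\norm{T^{m_j}u}$ from below and $\norm{T^{n_j}u}$ from above by a gliding-hump computation. You instead work on the Banach space $Y=\overline{\spa(X_0)}$ and show that the set $Z$ of vectors with $\liminf_n\norm{T^ny}=0$ is a dense $G_\delta$ (using (a) plus linearity to see $\spa(X_0)\subset U_{m,N}$) while the set $W$ of vectors with unbounded orbit is residual (the strong, Baire-category form of Banach--Steinhaus applied to $T^n|_Y$, which is exactly where (b) enters); Baire then yields an irregular vector in $Z\cap W$, and Theorem~\ref{t_ly_irr} finishes as in the paper. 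Your argument is shorter and in fact gives slightly more: the irregular vectors form a residual subset of $Y$, not just a nonempty one. What the paper's explicit construction buys is a reusable technique: the same selection-plus-series scheme is what gets adapted later (Theorems~\ref{DDC} and \ref{dly}) to produce dense irregular and distributionally irregular manifolds, something the bare category argument does not directly deliver. One small wording point: it is each closed set $\{y\in Y:\sup_n\norm{T^ny}\le k\}$ that must be shown to have empty interior (via the usual translation/scaling argument contradicting (b)), from which the meagerness of $Y\setminus W$ follows; your phrasing attributes the empty interior to the whole complement, which is not quite what is needed, though the intended argument is clearly the right one.
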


\begin{proof}
Suppose $T$ is Li-Yorke chaotic, by Theorem~\ref{t_ly_irr} we find
an irregular vector $x\in X$. By setting $X_0=\{x\}$,  we verify the
LYCC easily.

Conversely, suppose $T$ satisfies the LYCC, let $(n_k)_k$,
 and $X_0\subset X$ be the respective sequence of
integers and subset of vectors satisfying conditions {(a)} and {(b)}
of Definition~\ref{LYC}. If there is $x\in X_0$ such that $x$ is an
irregular vector, then we are done. Otherwise we observe that every
$u\in \spa (X_0)$ satisfies $\lim_kT^{n_k}u=0$ and
$\sup_n\norm{T^nu}<\infty$. Passing to subsequences of $(n_k)_k$ and
$(m_k)_k$, if necessary, by property {(b)} and since
$\sup_n \norm{T^n|_Y}=\infty$,
  we can obtain a sequence $(u_j)_j$
of normalized vectors in $\spa (X_0)$   such that

 \begin{enumerate}
 \item[(a)'] $\norm{T^{n_k} u_j}<\frac{1}{j}$, $j=1,\dots,k$, $k\in \N$, and
 \item[(b)'] $\norm{T^{m_j} u_j} > 3^j M_{j-1}$, $j>1$
 \end{enumerate}
 where $M_k:=\sup \{\norm{T^nu_i} \ ; \ i=1,\dots ,k, \ n\geq 0 \}<\infty$, $k\in \N$.
 Without loss of generality, we may suppose that $m_1<n_1<m_2<n_2<\dots $. We select any infinite
 subset $I\subset \N$ such that, for each $j\in \N$, if $i\in I$ with $i>j$, then $2^i>2^j\norm{T}^{n_j}$.
The vector
\[
 u:= \sum_{i\in I} \frac{1}{2^i} u_i
\]
is well-defined since the series is convergent. We will see that
$u$ is an irregular vector for $T$.
On one hand,
\begin{align*}
 \norm{T^{m_j}u}
 &\ge
 \frac{1}{2^j}\norm{T^{m_j}u_j} -
 \sum_{i\in I, \ i\neq j} \frac{1}{2^i}\norm{T^{m_j}u_i}
 \\
 &>
 \frac{3^jM_{j-1}}{2^j} - \sum_{i\in I, \ i< j} \frac{M_{j-1}}{2^i} -
 \sum_{i\in I, \ i> j} \frac{1}{2^i}\norm{T^{m_j}u_i}
 \\
 &>
 \left(\frac{3^j}{2^j} -1\right) M_{j-1}- \sum_{ i\geq  j} \frac{1}{2^i} \ \
 \underset{j\to \infty , \ j\in I}{\longrightarrow} \ +\infty .
\end{align*}

On the other hand,
\[
 \norm{T^{n_j}u} \leq \sum_{i\in I, \ i\leq j} \frac{j^{-1}}{2^i} +
 \sum_{i\in I, \ i> j} \frac{1}{2^i}\norm{T^{n_j}u_i} <
 \frac{1}{j}+\frac{1}{2^{j-1}}, \ j\in I,
\]
which shows that $u$ is an   irregular vector.
\end{proof}

\section{The strong criterion for distributional chaos and spectral properties}
\label{sec:SCDC-SP}

The following  criterion for distributional chaos was introduced in
\cite{HCC09}. Since, for several reasons that will be clarified
soon, this criterion is somehow very restrictive, we will call it
the ``strong'' criterion. In this section we will study the spectral
properties of operators that satisfy this criterion.

\begin{definition}\label{SCDC}
An operator $T:X\rightarrow X$ satisfies the \emph{Strong
 Distributional Chaos Criterion} (SDCC), if there is a constant
$r >1$ such that for any  $m\in \N$, there exists $x_{m}\in
X\setminus\{0\}$ satisfying
\begin{enumerate}\setlength{\itemsep}{0pt}
\item[(i)]
$\displaystyle \lim_{k\rightarrow \infty} \|T^{k}x_{m}\|=0$,
\item[(ii)]
$\|T^{i}x_{m}\|\ge r ^{i} \|x_{m}\|$ for  $i=1,2, \ldots, m$.
\end{enumerate}
Such $r$ is said to be a {\em SDCC-constant} for the operator $T$.
\end{definition}

\begin{theorem}[{\cite[Theorem 3.3]{HCC09}}]\label{SCDC_dc}
Let $T:X\rightarrow X$ be an operator. If $T$ satisfies the Strong
Distributional Chaos Criterion, then $T$ is distributionally
chaotic.
\end{theorem}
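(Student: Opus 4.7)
The plan is to reduce the problem to constructing a single \emph{distributionally irregular} vector $z \in X$ in the sense of Definition~\ref{def_div}, and then exploit linearity. Suppose we have such $z$ with index sets $A, B \subset \N$ of upper density $1$ witnessing $T^n z \to 0$ along $A$ and $\|T^n z\| \to \infty$ along $B$. Then $\Gamma := \{\lambda z : \lambda \in \K\setminus\{0\}\}$ is an uncountable distributionally $\eps$-scrambled set for any $\eps > 0$: by linearity $\|T^n(\lambda z - \mu z)\| = |\lambda-\mu|\,\|T^n z\|$, so for distinct $\lambda,\mu$ and any $\tau > 0$, the set $\{n : \|T^n(\lambda z - \mu z)\| < \tau\}$ eventually contains $A$ (giving $F_{\lambda z,\mu z}^{*}(\tau) = 1$), while $\{n : \|T^n(\lambda z - \mu z)\| \geq \eps\}$ eventually contains $B$ (giving $F_{\lambda z,\mu z}(\eps) = 0$). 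This reduces matters to constructing one distributionally irregular vector from the SDCC data.

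To build $z$, I would normalize $\|x_m\| = 1$ and, using (i), choose $N_m > m$ with $\|T^k x_m\| < 2^{-m}$ for every $k \ge N_m$. The geometric picture is that each $x_m$ has a ``growth window'' $[1, m]$ on which $\|T^n x_m\| \ge r^n$ by (ii), and a ``decay tail'' $[N_m, \infty)$ on which $\|T^n x_m\|$ is negligible. The candidate $z$ is a sparse weighted series
\[
 z = \sum_{j \ge 1} \alpha_j\, x_{m_j},
\]
with $(m_j)$ extremely lacunary (so that $m_{j+1} \gg N_{m_j}$) and $(\alpha_j)$ rapidly decreasing, both chosen inductively. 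By interleaving the growth windows and decay tails, one arranges that on some subinterval $B_j \subset [1, m_j]$ of relative length tending to $1$ the $j$-th summand dominates and $\|T^n z\| \to \infty$, while on some subinterval $A_j \subset [N_{m_j}, m_{j+1}]$ of relative length tending to $1$ every summand is negligible and $\|T^n z\| \to 0$. Lacunarity of $(m_j)$ then forces both $A := \bigcup_j A_j$ and $B := \bigcup_j B_j$ to have upper density $1$, by a direct counting argument at the scales $m_{j+1}$ and $m_j$, respectively.

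The main obstacle lies in the cross-term bookkeeping when balancing $(\alpha_j)$: the scalars must be small enough that $\alpha_k\|T^n x_{m_k}\|$ for $k \ne j$ is dominated by $\alpha_j \|T^n x_{m_j}\|$ on $B_j$, yet large enough that the latter actually blows up there. The delicate case is the ``future'' terms $k > j$: inside their growth window one only has the upper bound $\|T^n x_{m_k}\| \leq \|T\|^n$, so $\alpha_{j+1}$ must compensate by a factor roughly of order $\|T\|^{-m_{j+1}}$. Reconciling this with the requirement that $\alpha_j r^n \to \infty$ on $B_j$ is possible because the SDCC-constant $r > 1$ provides exponential cushion on the bulk of each growth window, allowing $B_j$ to be a proper subinterval of $[1, m_j]$ still with relative length tending to $1$. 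Once the recursive choice of $(m_j, \alpha_j)$ is formalized and the density counting verified, the two properties of $z$ are immediate and the reduction in the first paragraph concludes the proof.
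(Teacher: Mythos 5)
Your first paragraph is fine: it is exactly Proposition~\ref{div_dc} of the paper (the span of a distributionally irregular vector is a distributionally $\eps$-scrambled set for every $\eps>0$), so the whole weight rests on producing the vector $z$ from the SDCC data. That construction, as you describe it, breaks down precisely at the point you flag. You want the density of $A$ to be counted at the scale $m_{j+1}$, which forces $A_j$ to fill out almost all of $[1,m_{j+1}]$ and hence forces $\alpha_{j+1}\lesssim\norm{T}^{-m_{j+1}}$, as you say. But the only lower bound the SDCC gives on the growth window is $\norm{T^n x_{m_{j+1}}}\ge r^n\norm{x_{m_{j+1}}}$, and since $\norm{T}\ge r$ (take $i=1$ in condition (ii)), one has $\alpha_{j+1}r^n\le(r/\norm{T})^{m_{j+1}}\le 1$ for \emph{every} $n\le m_{j+1}$: no subinterval $B_{j+1}\subset[1,m_{j+1}]$, however chosen, yields a certified blow-up, so the claimed ``exponential cushion'' from $r>1$ simply is not there under your normalization. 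Worse, the stated geometry is internally inconsistent: if $A_j$ occupies a proportion tending to $1$ of $[1,m_{j+1}]$ and $B_{j+1}$ occupies a proportion tending to $1$ of $[1,m_{j+1}]$, the two sets meet in a set of proportion close to $1$ on which you demand $\norm{T^nz}$ to be simultaneously negligible and arbitrarily large.

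The repair is to decouple the scales in the induction: first fix the right endpoint $c_j$ of $A_j$ (all that is needed is $c_j\gg\max_{k\le j}N_{m_k}$, and the upper density of $A$ is then computed at the scales $c_j$, not at $m_{j+1}$), put $\alpha_{j+1}\approx\norm{T}^{-c_j}$, and only \emph{afterwards} choose $m_{j+1}$ much larger than $c_j\log\norm{T}/\log r$, so that $\alpha_{j+1}r^n\ge j$ on an interval $[s_{j+1},m_{j+1}]$ with $s_{j+1}/m_{j+1}\to 0$; then count the density of $B$ at the scales $m_{j+1}$. This is exactly the device used in the paper's Proposition~\ref{dcc_div}, where the coefficient attached to $x_{2k}$ is $\norm{T}^{-N_{2k-1}}$ (the \emph{previous} scale) and condition \eqref{eq1} asks for growth beyond $m\norm{T}^{N_{m-1}}$ on most of $[0,N_m)$, arranged by choosing $N_m$ after $N_{m-1}$. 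Note also that the paper itself does not reprove this theorem (it cites \cite{HCC09}); within its framework the route would be: the SDCC trivially implies the hypothesis of Theorem~\ref{wdcc} (for $i\ge\log_r C_m$ one has $r^i\ge C_m$, a vanishing fraction of $[0,N_m)$ when $N_m$ is large), hence the DCC by Theorem~\ref{wdcc_dcc}, and then Propositions~\ref{dcc_div} and \ref{div_dc} conclude -- your reduction step being the last of these.
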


By $\D$ we mean the open unit disc, its boundary is $\partial{\D}$ and the complement of
the closed unit disc will be written as $\overline{\D}^c$. We denote by $r(T)$ the spectral radius
of $T$.

\begin{proposition}\label{SCDC_spect}
Let $T:X\rightarrow X$ be an operator. The following properties
hold:
\begin{enumerate}\setlength{\itemsep}{0pt}
\item[(a)]
If there  exists $r>1$ such that for all $m\in\N$ there exists
$x_m\in X\setminus\{0\}$ with
\begin{equation}\label{r}
 \|T^ix_m\|\geq r^i\| x_m\|, \text{ for all } i=1, \ldots , m,
\end{equation}
then $r(T)\geq r$.

\item[(b)]
\cite[Lemma 6.4 (b)]{FMM02} If $\sigma(T) \subset \overline{\D}^c$ and
$\lim_{k\to\infty}\|T^kx\|=0$, then $x=0$.
\end{enumerate}
\end{proposition}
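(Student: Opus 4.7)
The proposition splits into two independent assertions, and the plan is to handle each directly from the Gelfand spectral radius formula. For part (a), the idea is that the hypothesis already forces $\|T^m\|$ to grow at least like $r^m$: taking $i = m$ in the inequality \eqref{r} and using that $x_m \neq 0$, I immediately get
\[
 \|T^m\| \geq \frac{\|T^m x_m\|}{\|x_m\|} \geq r^m.
\]
Therefore $\|T^m\|^{1/m} \geq r$ for every $m \in \N$, and by Gelfand's formula $r(T) = \lim_{m\to\infty} \|T^m\|^{1/m} \geq r$. The only thing to notice is that the intermediate inequalities for $i < m$ play no role for the spectral radius bound; they are really there to support the distributional chaos application rather than the spectral statement.

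For part (b), which the authors attribute to \cite{FMM02}, the natural route is via inversion. Since $\sigma(T) \subset \overline{\D}^c$, in particular $0 \notin \sigma(T)$, so $T$ is invertible in $\mathcal{L}(X)$. The spectral mapping theorem gives $\sigma(T^{-1}) = \{1/\lambda : \lambda \in \sigma(T)\} \subset \D$, hence $r(T^{-1}) < 1$. Gelfand's formula then yields $\|T^{-n}\|^{1/n} \to r(T^{-1}) < 1$, so $\|T^{-n}\| \to 0$ (in fact exponentially). For any $x \in X$ and any $n \in \N$ we have
\[
 \|x\| = \|T^{-n} T^n x\| \leq \|T^{-n}\|\,\|T^n x\|.
\]
Assuming $\|T^n x\| \to 0$, the right-hand side is a product of a null sequence and a bounded sequence, hence tends to $0$, forcing $x = 0$.

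Neither part presents a real obstacle; the only care needed is in invoking the spectral mapping theorem for the inverse in (b), which is standard for an invertible bounded operator. If one wished to avoid spectral mapping, a functional-calculus argument using the Cauchy integral $T^{-n} = \frac{1}{2\pi i}\int_\gamma \lambda^{-n}(\lambda - T)^{-1}\,d\lambda$ over a contour $\gamma$ lying in $\overline{\D}^c$ but inside the resolvent set would yield the same exponential decay of $\|T^{-n}\|$, and from there the conclusion of (b) follows as above.
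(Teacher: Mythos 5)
Your proof is correct and matches the paper's in substance: for (a) the paper also argues via Gelfand's formula using only the endpoint inequality $\|T^{m}x_m\|\geq r^{m}\|x_m\|$ (it phrases this as a contradiction, whereas your direct version $\|T^m\|^{1/m}\geq r$ for all $m$ is slightly cleaner), so your remark that the intermediate inequalities are irrelevant to the spectral bound is accurate. For (b) the paper offers no proof, merely citing \cite{FMM02}; your inversion argument ($0\notin\sigma(T)$, $r(T^{-1})<1$, hence $\|x\|\leq\|T^{-n}\|\,\|T^nx\|\to 0$) is the standard argument behind that citation and is correct.
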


\begin{proof}
(a) Assume that $r(T)<r$. Let $\varepsilon >0$ be such that
$r(T)+\varepsilon <r$. Then there exists $m\in\N$ such that
for all $n\geq m$ we have that $\|T^n\|^{1/n} \leq r(T)+\varepsilon
<r$. Moreover, by (\ref{r}) for $m+1$ there exists $x\in
X\setminus\{0\} $ such that $\| T^ix\|\geq r^i\|x\|$ for $i=1,
\ldots, m+1$. Then $\|T^{m+1}\frac{x}{\|x\|}\|\geq r^{m+1}$, so
$\|T^{m+1}\|^{1/{m+1}}\geq r$, which is a contradiction.
\end{proof}

\begin{corollary}\label{spect_rad}
If $T:X\rightarrow X$ satisfies the Strong Distributional Chaos
Criterion with SDCC-constant $r$, then $r(T)\ge r$.
\end{corollary}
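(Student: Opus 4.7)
The plan is to observe that this corollary is essentially an immediate consequence of Proposition~\ref{SCDC_spect}(a). First I would unpack the hypothesis: if $T$ satisfies the SDCC with constant $r>1$, then by Definition~\ref{SCDC}, for every $m\in\N$ there exists a nonzero vector $x_m\in X$ satisfying condition (ii), namely
\[
 \|T^i x_m\| \geq r^i \|x_m\|, \quad i=1,\ldots,m.
\]
This is exactly the hypothesis of Proposition~\ref{SCDC_spect}(a) (condition~(\ref{r})), so applying that proposition directly yields $r(T)\geq r$.

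Note that condition~(i) of the SDCC (the existence of a subsequence along which $T^k x_m$ tends to zero) plays no role here; only the lower bound on the first $m$ iterates is used to force the spectral radius to be large. The argument inside Proposition~\ref{SCDC_spect}(a) itself is the real content: if one assumed $r(T)<r$, then by the spectral radius formula $\|T^n\|^{1/n}\to r(T)$ and one could pick $n\geq m$ with $\|T^n\|^{1/n}<r$, contradicting the existence of a normalized vector $x_{m+1}/\|x_{m+1}\|$ on which $T^{m+1}$ has norm at least $r^{m+1}$.

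There is no real obstacle here, since the corollary is just the specialization of part~(a) of the proposition to the SDCC setup; the proof reduces to a single line citing Proposition~\ref{SCDC_spect}(a).
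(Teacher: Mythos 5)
Your proposal is correct and matches the paper's intent exactly: the corollary is stated without proof precisely because SDCC condition (ii) is literally the hypothesis of Proposition~\ref{SCDC_spect}(a), so the conclusion $r(T)\ge r$ follows immediately. Your additional recap of the spectral-radius argument inside part (a) is accurate but not needed for the corollary itself.
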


\begin{theorem}\label{SCDC_spect2}
 If  $T$ satisfies the Strong Distributional Chaos Criterion with
SDCC-constant $r$, then the following properties hold:
\begin{enumerate}
\item[(a)]
For any $r_0$ with $1\le r_0 \le r$ we have $\sigma(T)\cap
\partial D(0,r_0) \ne\emptyset$.
\item[(b)]
There are not disjoint  closed subsets $F_1$ and $F_2$ of $\C$
such that  $F_1\subset D(0,r)$ and $F_2\subset \overline{\D}^c$
with $F_1\cup F_2= \sigma (T)$.
\end{enumerate}
\end{theorem}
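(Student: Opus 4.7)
The plan is to derive (a) from (b), and to establish (b) directly using the Riesz analytic functional calculus together with Proposition~\ref{SCDC_spect}.

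For (b), argue by contradiction: suppose $\sigma(T)=F_1\cup F_2$ with the stated properties. Being closed subsets of the compact set $\sigma(T)$, both $F_i$ are compact. First dispose of the degenerate cases. If $F_2=\emptyset$ then $\sigma(T)\subset D(0,r)$, so $r(T)<r$, contradicting Corollary~\ref{spect_rad}; if $F_1=\emptyset$ then $\sigma(T)\subset\overline{\D}^c$, and applying Proposition~\ref{SCDC_spect}(b) to any SDCC vector $x_m$ (which satisfies $\lim_k\norm{T^k x_m}=0$ by Definition~\ref{SCDC}(i)) forces $x_m=0$, again a contradiction. Assume then that both $F_i$ are nonempty. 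The Riesz idempotents associated with the disjoint compact pieces $F_1,F_2$ yield a splitting $X=X_1\oplus X_2$ into closed $T$-invariant subspaces, and the restrictions $T_i:=T|_{X_i}$ satisfy $\sigma(T_i)=F_i$.

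Now decompose each SDCC vector as $x_m=u_m+v_m$ with $u_m\in X_1$, $v_m\in X_2$. Because the projection onto $X_2$ is bounded and commutes with $T$, the hypothesis $\lim_k\norm{T^k x_m}=0$ implies $\lim_k\norm{T_2^k v_m}=0$; since $\sigma(T_2)=F_2\subset\overline{\D}^c$, Proposition~\ref{SCDC_spect}(b) applied to $T_2$ gives $v_m=0$. Hence every $x_m$ lies in $X_1$ and the SDCC estimate $\norm{T^i x_m}\ge r^i\norm{x_m}$ becomes $\norm{T_1^i x_m}\ge r^i\norm{x_m}$ for $i=1,\ldots,m$. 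Proposition~\ref{SCDC_spect}(a) applied to $T_1$ then yields $r(T_1)\ge r$. But $F_1$ is a compact subset of the open disc $D(0,r)$, so $r(T_1)=\max_{z\in F_1}|z|<r$, the desired contradiction.

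For (a), fix $r_0\in[1,r]$ and suppose, for contradiction, that $\sigma(T)\cap\partial D(0,r_0)=\emptyset$. Set $F_1:=\sigma(T)\cap D(0,r_0)$ and $F_2:=\sigma(T)\cap\overline{D(0,r_0)}^c$. The absence of spectrum on the circle makes these two sets closed in $\C$ and disjoint, with $F_1\cup F_2=\sigma(T)$. Since $r_0\le r$ one has $F_1\subset D(0,r)$, and since $r_0\ge 1$ one has $F_2\subset\overline{\D}^c$, giving exactly the configuration forbidden by~(b). The only substantial ingredient in the whole argument is the invocation of the Riesz functional calculus in~(b) to produce the spectral splitting; once that is in hand, everything reduces to a routine combination of the two parts of Proposition~\ref{SCDC_spect}.
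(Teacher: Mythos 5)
Your proof is correct and rests on exactly the same core argument as the paper's: the Riesz spectral decomposition $X=X_1\oplus X_2$ with $\sigma(T_i)=F_i$, forcing the $X_2$-component of each SDCC vector to vanish via Proposition~\ref{SCDC_spect}(b), and then contradicting $\sigma(T_1)\subset D(0,r)$ through $r(T_1)\ge r$ from Proposition~\ref{SCDC_spect}(a). The only organizational difference is that the paper repeats this decomposition argument a second time (splitting the spectrum at $\partial D(0,r_0)$) to prove (a) directly, whereas you deduce (a) formally from (b); this works, and your explicit treatment of the degenerate cases $F_1=\emptyset$ and $F_2=\emptyset$ is precisely what makes that deduction complete.
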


\begin{proof}
(a) Assume that there exists $r_0\le r$ such that $\sigma(T)\cap
\partial D(0,r_0) = \emptyset$. Take the spectral decomposition
$\sigma_1= D(0,r_0) \cap \sigma(T) $ and
$\sigma_2= \overline{D(0,r_0)}^c \cap \sigma(T)$. Let $X_i$ and
$T_i$ with $i=1,2$ be given by the  above spectral  decomposition such
that $X=X_1 \oplus X_2$, $T=T_1 \oplus T_2$ with
$\sigma_i=\sigma(T_i)$, $i=1,2$. Let $m\in\N$. Then there
exists $x_m=x_m^1\oplus x_m^2\in X\setminus \{0\}$ such that
$\|T^kx_m\|=\|T_1^kx_m^1\|+\|T^k_2x_m^2\| \to 0$ as $k$ tends to
infinity and  $\|T^ix_m\|\geq r^i\| x_m\| $ for $i=1, \ldots, m$.
Using the same argument as in \cite[Lemma 6.4 (b)]{FMM02}  we have
that $\|x_m^2\| \leq \|T^{k}_2x_m^2\|$, which tends to $0$ as $k\to \infty$,
that is, $x_m^2=0$. Hence $x_m=x_m^1$ and
$\|T^kx_m\|=\|T_1^kx_m^1\|\geq r^i\| x_m\| $ for $i=1, \ldots, m$.
Henceforth, $r(T_1)\geq r$ but this is a contradiction since $\sigma(T_1)
\subset D(0,r_0)$.

(b) Let $F_1\subset D(0,r)$ and $F_2\subset \overline{\D}^c$ be
closed subsets of $\sigma(T)$  such that $F_1\cap F_2=\emptyset$ and
$F_1\cup F_2 = \sigma(T)$. Let $X_i$ and $T_i$ with $i=1,2$ be given
by this spectral decomposition, i.e., $X=X_1 \oplus X_2$, $T=T_1
\oplus T_2$ with $F_i=\sigma(T_i)$, $i=1,2$. Let $m\in \N$. Then
there exists $x_m=x_m^1\oplus x_m^2\in X\setminus \{0\}$ such that
$\|T^kx_m\|=\|T_1^kx_m^1\|+\|T^k_2x_m^2\| \to 0$ as $k$ tends to
infinity and  $\|T^ix_m\|\geq r^i\| x_m\| $ for $i=1, \ldots, m$. By
part (b) of Proposition \ref{SCDC_spect} we have that $x_m^2=0$.
Then by part (a) of Proposition \ref{SCDC_spect} we obtain that
$r(T_1)\geq r$ which is a contradiction since $\sigma(T_1)=\sigma_1\subset
D(0,r)$.
\end{proof}

An operator $S:X\to X$ is called {\em strictly singular} if for
every infinite dimensional subspace $M$ of $X$, the restriction of
$S$ to $M$ is not a homeomorphism. An example of strictly singular
operator is the compact operators.

Notice that a small compact perturbation of the unit operator could
be distributionally chaotic \cite[Proposition 3.5]{HTS09}. Also, in
\cite[Proposition 3.3]{HTS09} is proven that any compact
perturbation of scalar operator can not satisfy the SDCC. In the
next result we improve that property, that is, if $S$ is a strictly
singular operator, then $S+\lambda I$ does   not satisfy the SDCC.

\begin{corollary}\label{SS_SCDC}
If $S:X\to X$ is a  strictly singular  operator  and $\lambda \in
\C$, then $\lambda I+S$ does not satisfy  the SDCC.
\end{corollary}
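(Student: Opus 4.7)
The plan is to combine part (a) of Theorem~\ref{SCDC_spect2} with a structural fact about the spectrum of a strictly singular perturbation of a scalar operator. Specifically, it is a classical result (going back to Kato, and appearing in standard treatments of Fredholm theory) that every strictly singular operator $S$ on a Banach space is a Riesz operator: its spectrum $\sigma(S)$ is at most countable, each nonzero element is an eigenvalue of finite algebraic multiplicity, and $0$ is the only possible accumulation point. By the spectral mapping theorem for polynomials we have
\[
\sigma(\lambda I + S) = \lambda + \sigma(S),
\]
so $\sigma(\lambda I + S)$ is a countable set whose only possible accumulation point is $\lambda$.

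Now suppose, toward a contradiction, that $\lambda I + S$ satisfies the SDCC with some constant $r>1$. Theorem~\ref{SCDC_spect2}(a) gives that for every $r_0\in[1,r]$ the circle $\partial D(0,r_0)$ meets $\sigma(\lambda I + S)$. Consider the map $r_0\mapsto \partial D(0,r_0)$; since $\sigma(\lambda I + S)$ is countable, the set of moduli
\[
\{\,|z|\; :\; z\in\sigma(\lambda I + S)\,\}
\]
is also at most countable. This cannot cover the uncountable interval $[1,r]$, so there exists some $r_0\in[1,r]$ with $\sigma(\lambda I + S)\cap\partial D(0,r_0)=\emptyset$, contradicting Theorem~\ref{SCDC_spect2}(a). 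Hence $\lambda I + S$ cannot satisfy the SDCC.

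The main obstacle is really just invoking the right ingredient, namely that strictly singular operators are Riesz operators (so that their spectrum has the same qualitative ``compact-like'' structure). Once this is in hand, everything else is an immediate application of the spectral mapping theorem and the cardinality comparison against Theorem~\ref{SCDC_spect2}(a); one could equally well derive a contradiction from part (b) by choosing $r_0\in(1,r)$ avoiding $\{|z|:z\in\sigma(\lambda I+S)\}$ and splitting the spectrum along $\partial D(0,r_0)$, but the argument via (a) is the most direct.
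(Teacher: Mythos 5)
Your proof is correct and follows essentially the same route as the paper: both arguments rest on the classical fact that a strictly singular operator is a Riesz operator, so that $\sigma(\lambda I+S)=\lambda+\sigma(S)$ is at most countable with $\lambda$ as its only possible accumulation point, and then derive a contradiction from Theorem~\ref{SCDC_spect2}. The only minor difference is that you invoke part (a) — a countable set of moduli cannot meet every circle $\partial D(0,r_0)$ with $r_0\in[1,r]$ — whereas the paper quotes part (b); your closing remark about splitting the spectrum along a spectrum-free circle of radius in $(1,r)$ is precisely the paper's intended argument.
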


\begin{proof}
It is well know that the spectrum of  strictly singular operator is
at most a countable  sequence of eigenvalues that converges to zero
as its only limit point. Hence the result is a consequence of part
(b) of Theorem \ref{SCDC_spect2}.
\end{proof}

A  \emph{hereditarily indecomposable}  Banach space  is an infinite
dimensional space such that no subspace can be written as a
topological sum of two infinite dimensional subspaces. W.T. Gowers
and B. Maurey constructed the first example of a hereditarily
indecomposable space \cite{GM93}. In particular, they proved that
every  operator $T$ defined on a hereditarily indecomposable space
$X$ can be written as $T=\lambda I+S$, where $\lambda \in \C$ and
$S$ is a strictly singular operator.

The class of hereditarily indecomposable spaces appeared for the
first time in relation with hypercyclicity and chaos in
\cite{BMP01}, where  it was proved that some complex separable
Banach spaces admit no Devaney chaotic operator.

\begin{corollary}\label{HI_SCDC}
There are no  operators satisfying the SDCC  on any hereditarily
indecomposable Banach space.
\end{corollary}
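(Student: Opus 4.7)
The plan is to combine the structural description of operators on hereditarily indecomposable spaces, which was already recalled in the paragraph preceding the statement, with the non-existence result for $\lambda I + S$ proved in Corollary~\ref{SS_SCDC}. Concretely, let $X$ be hereditarily indecomposable and let $T:X\to X$ be any operator. By the Gowers--Maurey theorem, $T$ can be written as $T=\lambda I+S$ for some $\lambda \in \C$ and some strictly singular $S:X\to X$. Corollary~\ref{SS_SCDC} then asserts precisely that such an operator does not satisfy the SDCC. Hence no operator on $X$ satisfies the SDCC.

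Since the two ingredients needed are already stated in the paper, no further work is required: the spectral obstruction has been encapsulated in Theorem~\ref{SCDC_spect2}(b), which prevents the spectrum from splitting into a part inside $D(0,r)$ and a part outside $\overline{\D}$, and the corollary on strictly singular operators exploits the fact that the spectrum of such an operator reduces to a countable sequence of eigenvalues accumulating only at $0$, so that $\sigma(\lambda I+S)=\lambda+\sigma(S)$ is a countable set with a single accumulation point at $\lambda$. This forces a partition of the spectrum of the form forbidden by Theorem~\ref{SCDC_spect2}(b), which is exactly what Corollary~\ref{SS_SCDC} formalizes.

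Thus the proof is a one-line application: invoke the Gowers--Maurey decomposition and apply Corollary~\ref{SS_SCDC}. There is essentially no obstacle here; the corollary is stated to emphasize a striking consequence of the previously developed machinery, namely the existence of separable infinite-dimensional Banach spaces on which the SDCC is vacuous, paralleling the analogous phenomenon for Devaney chaos established in \cite{BMP01}.
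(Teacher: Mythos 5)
Your proof is correct and matches the paper's intended argument: the paper leaves this corollary without a written proof precisely because it follows immediately from the Gowers--Maurey decomposition $T=\lambda I+S$ recalled in the preceding paragraph together with Corollary~\ref{SS_SCDC}. Nothing further is needed.
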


\section{Distributionally chaotic operators}

Our purpose in this section is to study distributional chaos in connection with
the existence of distributionally irregular vectors, and to give useful criteria
for distributional chaos. First of all, we obtain the analogous
of one of the implications in Theorem~\ref{t_ly_irr}.

\begin{proposition}\label{div_dc} If  $T:X \to X$
admits a distributionally irregular vector $x\in X$, then  $T$ is
distributionally chaotic.
\end{proposition}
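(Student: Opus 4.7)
The plan is to take as scrambled set the one-dimensional linear span $\Gamma := \spa\{x\}$, which is uncountable, and to check that any $\varepsilon > 0$ (say $\varepsilon = 1$) serves as the scramble constant. The key observation is that for any two distinct vectors $y = \lambda x$ and $z = \mu x$ in $\Gamma$, one has $\|T^n(y-z)\| = |\lambda-\mu|\,\|T^n x\|$ with $|\lambda-\mu| > 0$ fixed, so the orbit of $y-z$ is just a positive rescaling of the orbit of $x$. Both conditions in Definition~\ref{def_dc} can therefore be read off directly from the behaviour of $(T^n x)_n$ along the sequences $A$ and $B$ supplied by distributional irregularity.

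To obtain $F^{*}_{yz}(\tau) = 1$ for every $\tau > 0$, I would fix $\tau$ and observe that since $\|T^{n_k}x\| \to 0$, eventually $|\lambda-\mu|\|T^{n_k}x\| < \tau$. Thus the set $\{n\in\N : \|T^n(y-z)\| < \tau\}$ contains all but finitely many terms of $A=(n_k)_k$, which has upper density $1$. Since upper density is unaffected by removing finitely many elements, this set also has upper density $1$, giving $F^{*}_{yz}(\tau) = 1$. The analogous argument for $F_{yz}(\varepsilon) = 0$ uses the sequence $B=(m_k)_k$: since $\|T^{m_k}x\| \to \infty$, the set $\{n\in\N : \|T^n(y-z)\| \geq \varepsilon\}$ eventually contains all $m_k$ and therefore has upper density $1$, which forces $\ldens\{n\in\N : \|T^n(y-z)\| < \varepsilon\} = 0$, i.e.\ $F_{yz}(\varepsilon) = 0$.

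There is essentially no serious obstacle here. The only subtle point is the stability of upper density under removal of finitely many terms, which is immediate from the definition but must be invoked so that the threshold $\tau$ (or $\varepsilon$) can depend on the fixed constant $|\lambda-\mu|$ without destroying the density-$1$ conclusion. This flexibility is precisely what allows a single distributionally irregular vector to generate an entire scrambled line, mirroring the fact used at the end of the proof of Theorem~\ref{t_ly_irr} that an irregular vector generates a scrambled line for Li-Yorke chaos.
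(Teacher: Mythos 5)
Your proposal is correct and follows essentially the same argument as the paper: take the scrambled set to be $\spa\{x\}$, note that the difference of two distinct elements is a nonzero scalar multiple of $x$, and read off the two density conditions along the sequences $A$ and $B$ from distributional irregularity. The only difference is that you make explicit the routine density bookkeeping (stability of upper density under removing finitely many terms, and $\ldens$ of the complement of a set of upper density $1$ being $0$) that the paper leaves implicit.
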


\begin{proof}
If $x\in X$ is a distributionally irregular vector for $T$, then
there exist increasing sequences of integers $A=(n_k)_k$ and
$B=(m_k)_k$ such that $\udens (A)=\udens (B)=1$, $\lim_k
\norm{T^{n_k}x}=0$ and $\lim_k\norm{T^{m_k}x}=\infty$. Let $S:=\spa
\{ x\}$. If $y,z\in S$, $y\neq z$, then $y-z=\alpha x$ with $\alpha
\neq 0$. Therefore,
\begin{align*}
 \norm{T^{n_k}y-T^{n_k}z}
 &=\abs{\alpha} \norm{T^{n_k}x}\stackrel{k\to\infty}{\longrightarrow} 0, \mbox{ and }
 \\
 \norm{T^{m_k}y-T^{m_k}z}
 &=\abs{\alpha} \norm{T^{m_k}x}\stackrel{k\to\infty}{\longrightarrow} \infty ,
\end{align*}
which yields that $S$ is a distributionally $\varepsilon$-scrambled
set for $T$, for every $\varepsilon >0$.
\end{proof}

\begin{problem}
Does every distributionally chaotic  operator $T:X\to X$ admit a
distributionally irregular vector?
\end{problem}

The following result from \cite{HCC09} yields a sufficient condition for distributional chaos,
less restrictive than the SDCC.

\begin{theorem}[\cite{HCC09}]\label{wdcc}
Let $T:X\rightarrow X$ be an operator. If for any sequence of
positive numbers $(C_{m})_m$ increasing to $\infty$, there exists
$(x_{m})_{m}$ in $X \setminus \{ 0 \} $ satisfying
\begin{enumerate}
\item [(a)]
$\displaystyle \lim_{n\rightarrow \infty} T^{n}x_{m}=0$, and

\item [(b)]
there is a sequence of positive integers $(N_{m})_m$ increasing to
$\infty $, such that
\[
 \lim _{m\rightarrow\infty}
 \frac{1}{N_{m}}\card  \{ 0\le i < N_{m} \ ; \ \|T^{i}x_{m}\|
 \ge C_{m}\|x_{m}\|\}=1,
\]
\end{enumerate}
then $T$ is distributionally chaotic.
\end{theorem}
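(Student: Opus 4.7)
By Proposition~\ref{div_dc}, it suffices to produce a single distributionally irregular vector $u \in X$, since then $\spa\{u\}\setminus\{0\}$ is a one-dimensional distributionally scrambled set for $T$. So the plan is to build such a $u$ as an infinite series based on a carefully chosen subsequence of the vectors supplied by the hypothesis.

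First, apply the hypothesis to a fast-growing sequence such as $C_m = 2^m$, obtaining vectors $x_m \in X\setminus\{0\}$ and integers $N_m \to \infty$ with $T^n x_m \to 0$ and $\frac{1}{N_m}\card B_m \to 1$, where $B_m := \{0 \le i < N_m : \|T^i x_m\| \ge C_m \|x_m\|\}$. Normalize $\|x_m\|=1$, and for each $m$ fix a decay time $L_m$ with $\|T^n x_m\| < 1/m^2$ for all $n \ge L_m$; because $B_m \subset [0,L_m)$ and $|B_m| \ge (1-1/m)N_m$, one automatically has $L_m \ge (1-1/m)N_m$, so $L_m\to\infty$ as well.

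Next, extract a subsequence $(x_{m_k})_k$ inductively, choosing $m_k$ so large that $N_{m_k}$, $L_{m_k}$ and $C_{m_k}$ overwhelmingly dominate the parameters of $x_{m_1},\dots,x_{m_{k-1}}$ (for example $N_{m_k} > 2^k L_{m_{k-1}}$ and $C_{m_k} > 4^k$), and form the candidate
\[
 u := \sum_{k=1}^{\infty} c_k\, x_{m_k}
\]
for positive weights $c_k$ chosen so that the series converges, the products $c_k C_{m_k}$ diverge to $\infty$, and the block-interference terms stay controlled. The verification of distributional irregularity then splits in two. At indices $n \in B_{m_k}\cap[L_{m_{k-1}},N_{m_k})$ the $k$-th term dominates and $\|T^n u\| \ge c_k C_{m_k} - (\text{interference}) \to \infty$; taking the sampling times $N = N_{m_k}$ and using the density hypothesis, the set $\{n : \|T^n u\| > K\}$ has upper density $1$ for every $K>0$. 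At the complementary ``decay'' sampling times, the contributions from indices $j\le k$ are controlled by $c_j/m_j^2$ and the tail $\sum_{j>k}c_j\|T^n x_{m_j}\|$ is made negligible by the subsequence construction, yielding $\udens\{n:\|T^n u\|<\tau\}=1$ for every $\tau>0$.

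The main obstacle is the tension between these two density requirements: since $M_{m_k}:=\sup_n\|T^n x_{m_k}\|\ge C_{m_k}\to\infty$, a uniform tail estimate of the form $\sum c_j M_{m_j}<\infty$ is flatly inconsistent with the growth $c_kC_{m_k}\to\infty$ needed on the ``large'' side. The tail must therefore be controlled non-uniformly, separating the active indices at each sampling time into already-decayed ones (using condition~(a) together with the $L_{m_j}$'s), currently-bursting ones (contributing the dominant term), and still-pre-burst ones (negligible by the subsequence choice). Achieving both the ``large'' and ``small'' density-$1$ conclusions simultaneously requires a careful recursive coordination of the choices of $(m_k)$ and $(c_k)$, and this diagonalization is the technical heart of the argument.
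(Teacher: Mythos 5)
Your overall route coincides with the paper's: by Proposition~\ref{div_dc} it suffices to produce one distributionally irregular vector, and you propose to build it as a weighted series over a sparsified subsequence of the vectors given by the hypothesis --- this is exactly the combination of Theorem~\ref{wdcc_dcc} and Proposition~\ref{dcc_div}. The problem is that your proposal stops precisely where the proof has to be done: you declare the recursive coordination of $(m_k)$ and $(c_k)$ to be ``the technical heart'' without carrying it out, and the concrete conditions you do write down are not the right ones. The coupling you need is not $C_{m_k}>4^k$. Since the only available bound on a not-yet-decayed term is $\norm{T^i x_{m_j}}\le\norm{T}^i$, keeping the pre-burst terms harmless at all times $i$ up to the scale at which block $k$ is sampled forces $c_j\lesssim\norm{T}^{-S_{j-1}}$, where $S_{j-1}$ is at least that sampling scale; then the growth requirement $c_kC_{m_k}\to\infty$ forces $C_{m_k}\ge k\,\norm{T}^{S_{k-1}}$, a condition on $C_{m_k}$ relative to the \emph{previously chosen} data. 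This is exactly what the paper builds into \eqref{eq1} (threshold $m\norm{T}^{N_{m-1}}$) via a recursive extraction exploiting the freedom in the hypothesis; with your fixed choice $C_m=2^m$ it is attainable only if you state that $m_k$ is picked after $S_{k-1}$ so that $2^{m_k}$ dominates $k\norm{T}^{S_{k-1}}$.

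The more serious issue is that your small-side density claim is located at the wrong scales. At scale $N_{m_k}$ the burst set of $x_{m_k}$ fills a proportion $\ge 1-1/m_k$ of $[0,N_{m_k})$, and at those times $\norm{T^iu}$ is large; hence $\{i<N_{m_k}:\norm{T^iu}<\tau\}$ has density near $0$ there, not near $1$. To get $\udens\{n:\norm{T^nu}<\tau\}=1$ you must count at intermediate scales $S_k$ strictly between consecutive blocks: $S_k$ much larger than the time by which $x_{m_1},\dots,x_{m_k}$ have all decayed (so the bad initial segment is diluted), yet such that the next block is still invisible, which again forces $c_{k+1}\le\norm{T}^{-S_k}$ with $S_k$ fixed \emph{before} $m_{k+1}$ and $c_{k+1}$ are chosen, and correspondingly $C_{m_{k+1}}\ge (k+1)\norm{T}^{S_k}$. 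This interlacing of burst scales and smallness scales, with each new weight and threshold tied to the preceding intermediate scale, is precisely how the paper's proof of Proposition~\ref{dcc_div} is organized (only the even-indexed vectors enter the series, with weights $\norm{T}^{-N_{2k-1}}$, while the odd scales $N_{2m+1}$ carry the smallness count via \eqref{eq2}). Without spelling out this coordination your construction does not yet deliver both density-one conclusions simultaneously, so as written there is a genuine gap; the missing piece is fixable, but it is the actual content of the proof rather than a routine detail.
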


We introduce a variation of the above criterion by allowing to have
different sequences of vectors  in  part (a) and (b) of the above
theorem.

\begin{definition}\label{CDC}
An operator $T:X\to X$ satisfies the  \emph{Distributional Chaos
Criterion } (DCC) if there exist sequences $(x_{m})_{m}$ and
$(y_{m})_{m}$ in $X\setminus \{ 0 \}$ with $y_{m}\in \overline{\spa
\{x_{k} \ ; \ k \in \N\}  }$ satisfying
\begin{enumerate}
\item [(a)]
$\displaystyle\lim_{n\rightarrow \infty} T^{n}x=0$, $x\in X_0$, and

\item [(b)] there is a sequence of positive integers $(N_{m})_m$ increasing
to $\infty $, such that
\[
 \lim _{m\rightarrow \infty}\frac{1}{N_{m}}\card  \{ 0\le i < N_{m} \ ; \ \|T^{i}y_{m}\| \geq m\|y_{m}\|\}=1\;.
\]
%\item [(b)]
%$\displaystyle \lim_k\norm{T^{m_k}|_Y}=\infty$, where
%$Y:=\overline{\spa (X_0)}$.

\end{enumerate}
\end{definition}

Observe that our criterion has, a priori, weaker requirements than
the criterion of Cao, Cui and Hou in Theorem~\ref{wdcc}. We will
show that they are actually equivalent.

\begin{theorem}\label{wdcc_dcc}
Let $T:X\rightarrow X$ be an operator. The following properties are
equivalent:
\begin{enumerate}
\item[(i)]
$T$ satisfies the hypothesis of Theorem~\ref{wdcc}.

\item[(ii)]
$T$ satisfies the Distributional Chaos Criterion.
\end{enumerate}
\end{theorem}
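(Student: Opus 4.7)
The direction (i) $\Rightarrow$ (ii) will be immediate: applying the hypothesis of Theorem~\ref{wdcc} with the explicit choice $C_m=m$ yields a sequence $(x_m)\subset X\setminus\{0\}$ satisfying (a) and (b) there, and taking $y_m=x_m\in \spa\{x_k : k\in\N\}\subset \overline{\spa\{x_k : k\in\N\}}$ shows that the DCC holds with the same witnesses $(N_m)_m$ and the factor $m$ in place of $C_m$.

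For (ii) $\Rightarrow$ (i), the plan is to turn an arbitrary sequence $(C_m)_m$ increasing to $\infty$ into a valid choice of vectors for Theorem~\ref{wdcc} by perturbing the $y_j$ provided by the DCC into the subspace $\spa\{x_k : k\in\N\}$, on which iterates tend to zero. From the DCC we first extract $(x_k)_k$ with $T^n x_k\to 0$ for every $k$, together with $(y_j)_j\subset \overline{\spa\{x_k:k\in\N\}}$ and $(N_j)_j\to\infty$ for which the sets $A_j:=\{0\le i<N_j : \|T^iy_j\|\ge j\|y_j\|\}$ satisfy $\card(A_j)/N_j\to 1$. After passing to a subsequence we may assume $j\ge 2C_j+1$ for every $j$. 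For each $j$ I would replace $y_j$ by a finite linear combination $\tilde x_j\in \spa\{x_k:k\in\N\}$; such a $\tilde x_j$ automatically satisfies $T^n\tilde x_j\to 0$, and a sufficiently accurate approximation will transfer the density estimate from $y_j$ to $\tilde x_j$.

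Concretely, one normalizes $\|y_j\|=1$, sets $M_j:=\max_{0\le i<N_j}\|T^i\|$, and picks $\tilde x_j\in\spa\{x_k:k\in\N\}$ with $\|\tilde x_j-y_j\|<\min\{1/2,\,1/(2M_j)\}$. Then $1/2\le\|\tilde x_j\|\le 3/2$ and for each $i\in A_j$,
\[
 \|T^i\tilde x_j\|\ge \|T^iy_j\|-\|T^i\|\,\|\tilde x_j-y_j\|\ge j-\tfrac12\ge 2C_j\ge C_j\|\tilde x_j\|.
\]
Thus $(\tilde x_j)$, reindexed against $(C_m)_m$, fulfills (a) and (b) of Theorem~\ref{wdcc} with the same density witnesses $(N_j)_j$.

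The main obstacle is controlling $\|T^i(\tilde x_j-y_j)\|$ uniformly for $i<N_j$: the operator norms $M_j$ may grow very rapidly in $j$, so the accuracy $1/(2M_j)$ required by the estimate above can be extremely small. This is nevertheless achievable since $y_j\in \overline{\spa\{x_k:k\in\N\}}$, so any prescribed accuracy is attainable by a finite linear combination of the $x_k$. Condition (a) for $\tilde x_j$ is then free, because a finite linear combination of vectors $x_k$ with $T^nx_k\to 0$ automatically inherits the same property.
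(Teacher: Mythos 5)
Your proof is correct and follows the same route as the paper: the paper's one-line argument for (ii)$\Rightarrow$(i) is precisely ``approximate $y_m$ by elements of $\spa\{x_k\}$, getting (a) by linearity and (b) by density,'' which you make explicit via the accuracy $1/(2M_j)$ with $M_j=\max_{0\le i<N_j}\|T^i\|$ and a reindexing to handle an arbitrary sequence $(C_m)_m$. Your quantitative estimate $\|T^i\tilde x_j\|\ge j-\tfrac12\ge C_j\|\tilde x_j\|$ on the set $A_j$ is exactly the detail the paper leaves implicit, so the argument is sound.
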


\begin{proof}
We only need to prove that (ii) implies (i). By (ii) there exists
$({x}_{m})_{m}\subset \spa (X_0) \setminus \{ 0 \} $ that satisfies
the condition (a) of Theorem~\ref{wdcc} by linearity, and condition
(b) by density.
\end{proof}

Under the assumptions of the DCC we can ensure the existence of
distributionally irregular vectors.

\begin{proposition}\label{dcc_div}
If $T:X\to X$ satisfies the DCC, then $T$ admits a distributionally
irregular vector.
\end{proposition}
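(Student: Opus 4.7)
The plan is to apply the Baire category theorem in the closed (hence complete) subspace $Y := \overline{\spa \{x_k : k \in \N\}}$ of $X$. For each $k \in \N$ I would introduce
\[
 A_k := \{u \in Y : \udens \{n \in \N : \|T^n u\| > k\} = 1\}
\]
and
\[
 B_k := \{u \in Y : \udens \{n \in \N : \|T^n u\| < 1/k\} = 1\},
\]
and show that $D := \bigcap_k A_k \cap \bigcap_k B_k$ is non-empty. Any $u \in D$ is then distributionally irregular after a diagonal extraction of the required sequences.

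The first task is to verify that each $A_k$ and $B_k$ is $G_\delta$ in $Y$. This follows from the characterization
\[
 \udens(S) = 1 \iff \forall j, N_0 \in \N\ \exists N \ge N_0 : \frac{\card(S \cap [0,N))}{N} > 1 - \frac{1}{j},
\]
combined with the fact that $\{u : \|T^n u\| > k\}$ and $\{u : \|T^n u\| < 1/k\}$ are open, so the counting functions $u \mapsto \card\{n < N : \|T^n u\| > k\}$ and similarly for $<1/k$ are lower semi-continuous, making their level sets open. Density of $B_k$ is essentially for free: every $v \in \spa \{x_m\}$ satisfies $\lim_n T^n v = 0$ by linearity from condition~(a), so the complement of $\{n : \|T^n v\| < 1/k\}$ is finite and $v \in B_k$; thus $\spa \{x_m\} \subset B_k$ is dense in $Y$.

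The key technical step is density of $A_k$. Given $u_0 \in \spa \{x_m\}$ (so $C := \sup_n \|T^n u_0\| < \infty$) and $\varepsilon > 0$, after normalising $\|y_m\| = 1$ I would replace each $y_m$ by $\tilde{y}_m \in \spa \{x_k\}$ so close to $y_m$ that $\|T^i \tilde{y}_m\| \ge m - 1$ on $E_m := \{i < N_m : \|T^i y_m\| \ge m\}$ while $T^n \tilde{y}_m \to 0$ and $\|\tilde{y}_m\| \le 2$. Then I would inductively choose a rapidly increasing subsequence $(m_j)_j$ and coefficients $t_j > 0$ with $\sum_j t_j \|\tilde{y}_{m_j}\| < \varepsilon$ and $t_j(m_j - 1) > 2(k+C)$, setting $u := u_0 + \sum_j t_j \tilde{y}_{m_j}$. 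The inductive choice must arrange that the cross terms $\sum_{j' \neq j} t_{j'} \|T^i \tilde{y}_{m_{j'}}\|$ are negligible on $E_{m_j} \cap [N_{m_{j-1}}, N_{m_j})$: the $j' < j$ terms are killed by pushing $N_{m_{j-1}}$ past the decay of each prior $T^n \tilde{y}_{m_{j'}}$, while the tail $j' > j$ is killed by making $t_{j'}$ small enough to dominate $\|T\|^{N_{m_j}} \|\tilde{y}_{m_{j'}}\|$. A triangle-inequality estimate then yields $\|T^i u\| > k$ on $E_{m_j} \cap [N_{m_{j-1}}, N_{m_j})$, whose relative density inside $[0, N_{m_j})$ tends to $1$, so $\udens \{i : \|T^i u\| > k\} = 1$ and $u \in A_k$.

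Applying Baire, $D$ is a dense $G_\delta$ in $Y$, hence non-empty. For $u \in D$, a diagonal argument produces the required increasing sequences: choosing $(M_k)$ so fast-growing that $M_{k-1}/M_k < 1/k$ and each ratio $\card\{n < M_k : \|T^n u\| < 1/k\}/M_k > 1 - 1/k$, the set $A := \bigcup_k (\{n : \|T^n u\| < 1/k\} \cap [M_{k-1}, M_k))$ satisfies $\udens(A) = 1$ and $\|T^n u\| \to 0$ along $A$; an analogous construction on the sets $\{n : \|T^n u\| > k\}$ gives $B$. The main obstacle I expect is the density of $A_k$: a single perturbation by $t\tilde{y}_m$ only pushes $\udens$ up to $1 - 1/m$, since the large iterates of $\tilde{y}_m$ are concentrated in $[0, N_m)$, so one is forced to superpose perturbations at infinitely many scales while simultaneously respecting summability of $\sum t_j \|\tilde{y}_{m_j}\|$, the lower bound $t_j (m_j - 1) \ge 2(k+C)$, and the tail control $\|T\|^{N_{m_j}}$, all of which must be balanced by the inductive construction.
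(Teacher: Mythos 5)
Your proposal is correct in outline, but it takes a genuinely different route from the paper. The paper first passes (via Theorem~\ref{wdcc_dcc}) to a single normalized sequence $(x_m)_m$ and, after refining subsequences, imposes the two quantitative conditions \eqref{eq1} and \eqref{eq2} simultaneously; it then writes down one explicit vector $x=\sum_k \norm{T}^{-N_{2k-1}}x_{2k}$ and verifies directly that it is distributionally irregular. You instead work in the complete subspace $Y=\overline{\spa\{x_k \ ; \ k\in\N\}}$ and show that distributionally irregular vectors are \emph{residual} there: your sets $A_k,B_k$ are indeed $G_\delta$ (lower semicontinuity of the counting functions is the right observation), $B_k$ is dense for free since $\spa\{x_k \ ; \ k\in\N\}\subset B_k$ by condition (a), and the only real work is the density of $A_k$, which you obtain by an infinite perturbation $u_0+\sum_j t_j\tilde y_{m_j}$; the balancing you describe (summability of $\sum_j t_j\norm{\tilde y_{m_j}}$, the lower bound $t_j(m_j-1)>2(k+C)$, tail control against $\norm{T}^{N_{m_j}}$) is feasible, e.g.\ by taking $t_j$ comparable to $1/(m_j-1)$ and then choosing $m_j$ so large that $m_j-1\geq 4^j(k+C+1)\norm{T}^{N_{m_{j-1}}}$, and it is essentially the same bookkeeping the paper performs once, globally, in its series --- but your version is a bit lighter because each $A_k$ only requires exceeding the fixed level $k$ on a set of upper density $1$, while the Baire intersection and your final block-diagonal extraction of density-one sequences replace the paper's simultaneous smallness condition \eqref{eq2}. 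What your approach buys is a stronger conclusion (a dense $G_\delta$ of distributionally irregular vectors in $Y$, in the spirit of Theorem~\ref{DDC}); what the paper's buys is a short, self-contained construction of a single vector. One detail to fix when writing the induction out: the lower cutoff for block $j$ should not be $N_{m_{j-1}}$ itself but a separate $M_j$, chosen after $\tilde y_{m_1},\dots,\tilde y_{m_{j-1}}$ and $t_1,\dots,t_{j-1}$ are fixed, so that $\sum_{j'<j}t_{j'}\norm{T^i\tilde y_{m_{j'}}}<1/2$ for all $i\geq M_j$ (the decay threshold of $T^n\tilde y_{m_{j-1}}$ need not lie below $N_{m_{j-1}}$); since $m_j$ is chosen afterwards with $N_{m_j}\gg M_j$ and with the density defect of $E_{m_j}$ in $[0,N_{m_j})$ below $1/j$, this costs nothing in the density count.
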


\begin{proof}
By Theorem \ref{wdcc_dcc}, passing to subsequences if necessary, we
find a sequence $(x_m)_m$ of normalized vectors in $X$ and an
increasing sequence $(N_m)_m$ of integers with $N_m-N_{m-1}$ tending
to infinity such that
\begin{align}
 &
 \frac{1}{N_{m}}\card  \{ 0\le i < N_{m}: \|T^{i}x_{m}\|
 > m\norm{T}^{N_{m-1}}\}> 1-\frac{1}{m}, \label{eq1}
 \\
 &
 \frac{1}{N_{m}}\card  \{ 0\le i < N_{m}: \|T^{i}x_{k}\|
 <\frac{1}{m} \} > 1-\frac{1}{m^2}, \
 k=1,\dots, m-1, \label{eq2}
\end{align}
where $N_0:=1$.

Since our hypothesis obviously imply that $\norm{T}>1$, the series
\[
 x:=\sum_k \frac{1}{\norm{T}^{N_{2k-1}}} x_{2k}
\]
is convergent, thus $x\in X$. We will show that $x$ is a
distributionally  irregular vector for $T$. Indeed,
\begin{align*}
 \norm{T^ix}
 &\ge
 \frac{1}{\norm{T}^{N_{2m-1}}} \norm{T^ix_{2m}}
 -
 \sum_{k\neq m} \frac{1}{\norm{T}^{N_{2k-1}}} \norm{T^ix_{2k}}
 \\
 &>
 2m-\frac{1}{2m} \sum_{k<m} \frac{1}{\norm{T}^{N_{2k-1}}}- \sum_{k>m}
 \frac{1}{\norm{T}^{N_{2k-1}-N_{2m}}}
 \underset{m\to\infty}{\longrightarrow} \infty
\end{align*}
if $i< N_{2m}$, $\|T^{i}x_{2m}\| > 2m\norm{T}^{N_{2m-1}}$, and
$\|T^{i}x_{2k}\| <\frac{1}{2m}$, $k<m$. Conditions (\ref{eq1}) and (\ref{eq2})
above imply
\[
 \frac{\card \{ 0\leq i<N_{2m} \ ; \ \|T^{i}x_{2m}\| >
 2m\norm{T}^{N_{2m-1}}, \ \  \|T^{i}x_{2k}\| <\frac{1}{2m}, \
 k<m\}}{N_{2m}}  >\frac{m-1}{m},
\]
and we obtain an increasing sequence of integers $B=(m_k)_k$ such
that $\udens (B)=1$ and $\lim_{k\to \infty} \norm{T^{m_k}x}=\infty$.
On the other hand,
\[
 \norm{T^ix} \leq \sum_k \frac{1}{\norm{T}^{N_{2k-1}}}
 \norm{T^ix_{2k}}<\frac{1}{2m+1} \sum_{k\leq m}
 \frac{1}{\norm{T}^{N_{2k-1}}}+ \sum_{k>m}
 \frac{1}{\norm{T}^{N_{2k-1}-N_{2m}}}
 \underset{m\to\infty}{\longrightarrow} 0,
\]
if $i< N_{2m+1}$,  and $\|T^{i}x_{2k}\| <\frac{1}{2m+1}$, $k\leq m$.
Condition (\ref{eq2}) above implies that
\[
 \frac{1}{N_{2m+1}} \card \{ 0\leq i<N_{2m+1} \ ; \ \|T^{i}x_{2k}\|
 <\frac{1}{2m+1}, \ k=1, \dots , m\} >1-\frac{1}{m},
\]
which gives an increasing sequence $A=(n_k)_k$ of integers such that
$\udens (A)=1$ and $\lim_k \norm{T^{n_k}x}=0$, concluding the
result.
\end{proof}

Observe that Propositions \ref{div_dc} and \ref{dcc_div} provide an
alternative proof of Theorem~\ref{wdcc}.

\begin{remark}
Not every distributionally chaotic operator satisfies the
Distributional Chaos Criterion. Indeed, let us consider a weighted
forward shift $F_w:\ell^2\to\ell^2$, defined as $(x_1,x_2,\dots
)\mapsto (0,w_1x_1,w_2x_2,\dots )$, where the sequence of weights
$w=(w_k)_k$ consists, alternatively, of sufficiently large blocks of
$2$'s and blocks of $1/2$'s such that the vector $e_1=(1,0,\dots )$
is a distributionally irregular vector. On the other hand, if
$\lim_kF^k_wx=0$ then $x=0$.
\end{remark}

If we impose that the orbits converge to $0$ on a dense subset, then
the DCC can be characterized in terms of the existence of
distributionally irregular vectors.

\begin{corollary}\label{Cor2}
Let $T:X\rightarrow X$ be an operator such that there exists a dense
set $D$ with $\displaystyle\lim_{n\rightarrow \infty} T^{n}x=0$, for
all $ x\in D$. The following properties are equivalent:
\begin{enumerate}
\item[(i)]
$T$ satisfies the  Distributional Chaos Criterion.

\item[(ii)] $T$ admits a distributionally irregular vector.
\item[(iii)]
There exist a sequence  $(y_{m})_{m}$ in $X\setminus\{0\}$  and a
sequence of positive integers $(N_{m})_m$ increasing to $\infty $,
such that
\[
 \lim _{m\rightarrow \infty}\frac{1}{N_{m}}\card  \{ 0\le i < N_{m} \ ; \
 \|T^{i}y_{m}\| \geq m\|y_{m}\|\}=1.
\]
\end{enumerate}
\end{corollary}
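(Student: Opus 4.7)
The plan is to close the cycle (i) $\Rightarrow$ (ii) $\Rightarrow$ (iii) $\Rightarrow$ (i). The implication (i) $\Rightarrow$ (ii) is immediate from Proposition \ref{dcc_div} and needs no further argument.

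For (ii) $\Rightarrow$ (iii), I would start from a distributionally irregular vector $x$ and the accompanying increasing sequence $B=(m_k)_k$ with $\udens(B)=1$ and $\lim_k\norm{T^{m_k}x}=\infty$, and then simply set $y_m:=x$ for every $m\in\N$. For each fixed $m$ the set
\[
 B_m := \{\, i\in B \ ; \ \norm{T^i x}\geq m\norm{x}\,\}
\]
differs from $B$ only in finitely many terms (since $\norm{T^{m_k}x}\to\infty$), hence $\udens(B_m)=1$. Therefore one can pick $N_m\in\N$ with $\card(B_m\cap[0,N_m-1])/N_m>1-1/m$, and a trivial diagonal choice makes $(N_m)_m$ strictly increasing. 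Since $B_m\cap[0,N_m-1]$ is contained in $\{0\leq i<N_m \ ; \ \norm{T^iy_m}\geq m\norm{y_m}\}$, the density condition in (iii) follows.

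For (iii) $\Rightarrow$ (i), the sequence $(y_m)_m$ already realises condition (b) of the DCC, so the only task is to manufacture an accompanying sequence $(x_k)_k\subset X\setminus\{0\}$ satisfying condition (a) and with $y_m\in\overline{\spa\{x_k \ ; \ k\in\N\}}$ for every $m$. Here I would exploit the density hypothesis: for each pair $m,n\in\N$ pick $x_{m,n}\in D\setminus\{0\}$ with $\norm{x_{m,n}-y_m}<1/n$, and enumerate the resulting countable family as a single sequence $(x_k)_k$. Since each $x_k$ lies in $D$, we have $T^nx_k\to 0$ as $n\to\infty$; and every $y_m$ belongs to $\overline{\spa\{x_k\}}$, being a norm limit of $(x_{m,n})_n$.

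Since the substantive analytic content is already packaged in Proposition \ref{dcc_div}, none of the three implications presents a real obstacle. The mildly delicate point is the passage in (ii) $\Rightarrow$ (iii) from the pointwise-in-$m$ statement $\udens(B_m)=1$ to a single strictly increasing sequence $(N_m)_m$ witnessing the limit equal to $1$, but this is handled by the straightforward diagonal extraction outlined above.
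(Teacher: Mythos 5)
Your proof is correct and follows exactly the route the paper intends (the corollary is stated without proof there): (i)$\Rightarrow$(ii) is Proposition~\ref{dcc_div}, (ii)$\Rightarrow$(iii) is the trivial choice $y_m=x$ with $N_m$ extracted from the upper-density-one sequence, and (iii)$\Rightarrow$(i) uses the dense set $D$ of vectors with orbits tending to zero to supply the sequence $(x_k)_k$, whose closed span contains every $y_m$. The details you add (finiteness of $B\setminus B_m$, the diagonal choice of increasing $N_m$, approximating each $y_m$ by elements of $D\setminus\{0\}$) are the right routine verifications.
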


We can get stronger results under the assumptions above, which
depend on the existence of large scrambled sets consisting of
(distributionally) irregular vectors.

\begin{definition}\label{irreg_manif}
A linear manifold $Y\subset X$ is a \emph{(distributionally)
irregular manifold} for $T:X\to X$ if every non-zero vector $y\in
Y\setminus \{ 0\}$ is a (distributionally) irregular vector for $T$.
\end{definition}

Certainly, a (distributionally) irregular manifold is a
(distributionally) scrambled set.

\begin{theorem}\label{DDC}
Let $T:X\rightarrow X$ be an operator. If there exists a dense
subset $X_{0}\subset X$ such that $\lim_{n\rightarrow\infty}
T^{n}x=0$, for each $ x\in X_{0}$ and  $T$ admits a distributionally
irregular vector, then $T$ is admits a dense distributionally
irregular manifold.
\end{theorem}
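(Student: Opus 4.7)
The plan is to reduce to the Distributional Chaos Criterion and then carry out an inductive construction generalizing the single-vector construction in Proposition \ref{dcc_div}. Since $T$ has a distributionally irregular vector and $X_0$ is dense with $T^nx\to 0$ for $x\in X_0$, Corollary \ref{Cor2} yields that $T$ satisfies the DCC. By Theorem \ref{wdcc_dcc} I may therefore work with a sequence $(x_m)_{m\ge 1}\subset X\setminus\{0\}$ as in Theorem \ref{wdcc}: $\|x_m\|=1$, $\lim_n T^n x_m=0$ for each $m$, and an increasing $(N_m)$ along which the blowup density tends to $1$. After passing to subsequences as in the proof of Proposition \ref{dcc_div}, one obtains the familiar mutual estimates: $\|T^ix_k\|<1/m$ for $k<m$ and $\|T^ix_m\|\ge 2m\,\|T\|^{N_{m-1}}$, each on subsets of $\{0,\dots,N_m-1\}$ of density approaching $1$.

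I would then fix a countable dense sequence $(z_j)_{j\ge 1}\subset X_0$ and an enumeration $\N\times\N=\{(j_s,k_s)\}_{s\ge 1}$ in which every $j\in\N$ appears infinitely often as first coordinate. Inductively choose $m_1<m_2<\cdots$ and scalars $\epsilon_s>0$ satisfying, in the spirit of the coefficients of Proposition \ref{dcc_div}, that $\epsilon_s$ dominates all future tails of the form $\epsilon_t\|T\|^{N_{m_s}}$ ($t>s$) while still ensuring $\epsilon_s\|T^ix_{m_s}\|\to\infty$ at the blowup times of $x_{m_s}$, and with $\sum_{s:\,j_s=j}\epsilon_s<2^{-j}$ for each $j$. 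Define
\[
 v_j\ :=\ z_j+\sum_{s:\,j_s=j}\epsilon_s\,x_{m_s}\quad(j\in\N),\qquad Y\ :=\ \spa\{v_j:j\in\N\}.
\]
Density of $Y$ in $X$ is immediate from $\|v_j-z_j\|<2^{-j}$ and density of $(z_j)$. To see that every nonzero $y=\sum_{j\in F}a_jv_j$ is distributionally irregular, decompose $y=z+w$ with $z:=\sum_{j\in F}a_jz_j\in\spa X_0$ (so $T^iz\to 0$) and $w:=\sum_{s:\,j_s\in F}a_{j_s}\epsilon_s\,x_{m_s}$. Setting $J:=\{j\in F:a_j\neq 0\}\neq\emptyset$, the index set $\{s:j_s\in J\}$ is infinite by the enumeration, and the argument of Proposition \ref{dcc_div} applied to this sub-series shows that $w$ is distributionally irregular: at blowup times of $x_{m_s}$ for $s$ with $j_s\in J$, the $s$-th term dominates lower-index terms via the smallness estimates and higher-index terms via the choice of $\epsilon_t$, $t>s$, yielding upper density $1$ of blowup; a complementary block of times gives upper density $1$ of vanishing. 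Adding the zero-orbit vector $z$ preserves both densities, hence $y$ is distributionally irregular.

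The main technical obstacle is precisely this last step: the inductive choice of $(m_s,\epsilon_s)$ must make the sub-series indexed by $\{s:j_s\in J\}$ distributionally irregular for \emph{every} nonempty finite $J\subset\N$, not only the full series $J=\N$ already treated in Proposition \ref{dcc_div}. The enumeration ensures every such $J$ is ``cofinally represented'' by the sequence $(j_s)$, and the coefficient-by-coefficient control of $\epsilon_s$ (which dominates, at step $s$, all subsequent $\epsilon_t$ amplified by $\|T\|^{N_{m_s}}$) is strong enough to work uniformly in $J$.
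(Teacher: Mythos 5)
Your proposal is correct and follows essentially the same route as the paper: both perturb a dense sequence of null-orbit vectors from $\spa (X_0)$ by pairwise disjoint infinite sub-series of the vectors produced by the DCC construction (proof of Proposition~\ref{dcc_div}), and then check that any nonzero finite combination decomposes as a vector whose orbit tends to zero plus a sub-series with bounded, non-cancelling coefficients, which the estimates of Proposition~\ref{dcc_div} show to be distributionally irregular. The only difference is bookkeeping: the paper splits one fixed distributionally irregular series via disjoint $0$--$1$ sequences $\gamma_m$ and scales by $1/m$, while you redistribute the blocks through an enumeration of $\N\times\N$ and an inductive choice of the coefficients $\epsilon_s$.
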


\begin{proof}
We consider a dense sequence $(y_n)_n$ in $X_0$. By
Proposition~\ref{dcc_div} there is a distributionally irregular
vector $x\in X$ for $T$ which can be written as a series
\[
 x=\sum_{k=1}^\infty x_k
\]
with the properties specified in the proof. We select a countable
collection $(\gamma_m)_m=((\gamma_{m,k})_k)_m$ of sequences of $0$'s
and $1$'s such that each sequence $\gamma_m$ contains an infinite
number of $1$'s and
\[
 \gamma_{m,k}=1 \Longrightarrow \gamma_{n,k}=0, \ \ \forall m\neq n,
 \ \ \forall k\in \N.
\]
We set the sequence of vectors $u_m=\sum_k \gamma_{m,k} x_k$, $m\in
\N$, which, by following the proof of Proposition~\ref{dcc_div}, are
distributionally irregular vectors since each $\gamma_m$ contains an
infinite number of $1$'s. Define now
\[
 z_m=y_m+\frac{1}{m} u_m , \ \ m\in \N.
\]

Since $(y_n)_n$ is dense in $X$ and the $u_m$'s are uniformly
bounded, we get that the sequence $(z_m)_m$ is dense in $X$. We set
$Y=\spa\{z_m \ ; \ m\in \N\}$, which is a dense subspace of $X$.
If $y\in Y\setminus \{ 0\}$, then we can write
\[
 y=y_0+\sum_k \rho_k x_k ,
\]
where $y_0\in X_0$ and the sequence of scalars $\rho=(\rho_k)_k$
only takes a finite number of values, being non-zero an infinite
number of $\rho_k$'s. This means, by following again the proof of
Proposition~\ref{dcc_div}, that
\[
 v:=\sum_k \rho_k x_k
\]
is a distributionally irregular vector for $T$. Since $y=y_0+v$ and
$\lim_kT^k y_0=0$, we get that $y$ is also a distributionally
irregular vector for $T$, and $Y$ is therefore a distributionally
scrambled set for $T$.
\end{proof}

An adaptation of the above argument, taking into account the proof Theorem~\ref{ly_lyc},
yields a useful sufficient condition for dense Li-Yorke chaos.

\begin{theorem}\label{dly}
Let $T:X\rightarrow X$ be an operator. If $\sup_n\norm{T^n}=\infty$
 and there  is a
dense subset $X_{0}\subset X$ such that $\lim_{k\rightarrow\infty}
T^{n}x=0$ for each $ x\in X_{0}$, then $T$ admits a dense irregular
manifold.
\end{theorem}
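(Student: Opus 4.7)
The plan is to adapt the construction from Theorem~\ref{DDC} by replacing the role of the distributionally irregular vector (produced there from the DCC) with a (plain) irregular vector produced from the LYCC. First, I would observe that the hypotheses of Theorem~\ref{dly} are exactly what is needed to invoke Theorem~\ref{ly_lyc}: setting $X_0$ as given, any increasing sequence $(n_k)_k$ serves condition (a) of the LYCC, and density of $X_0$ forces $Y:=\overline{\spa(X_0)}=X$, so $\sup_n\norm{T^n|_Y}=\sup_n\norm{T^n}=\infty$ gives (b). Rerunning the construction in the proof of Theorem~\ref{ly_lyc} then yields increasing sequences $(n_k)_k,(m_k)_k$, a normalized sequence $(u_i)_{i\in \N}\subset \spa(X_0)$ satisfying the estimates (a)' and (b)' of that proof, and an infinite index set $I\subset \N$ with the growth condition $2^i>2^j\norm{T}^{n_j}$ for $i>j$ in $I$.

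Next I would mimic the construction in the proof of Theorem~\ref{DDC}: choose a countable family $(\gamma_m)_m$ of pairwise disjoint $\{0,1\}$-sequences supported in $I$, each containing infinitely many $1$'s, put $I_m:=\{i\in I:\gamma_{m,i}=1\}$, and define
\[
 v_m:=\sum_{i\in I_m}\frac{1}{2^i}u_i,\qquad m\in\N.
\]
The same estimates that prove the vector $u=\sum_{i\in I}\frac{1}{2^i}u_i$ is irregular in Theorem~\ref{ly_lyc} show that each $v_m$ is irregular; moreover $\sup_m\norm{v_m}<\infty$. Fixing a dense sequence $(y_m)_m$ in $X_0$, I then set
\[
 z_m:=y_m+\frac{1}{m}v_m,\qquad Y:=\spa\{z_m:m\in\N\}.
\]
Density of $(y_m)_m$ plus uniform boundedness of $(v_m)_m$ forces $(z_m)_m$, and hence $Y$, to be dense in $X$.

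Finally I would verify that every nonzero $y\in Y$ is irregular. Writing $y=\sum_{m\in F}\lambda_m z_m$ with $F$ finite and all $\lambda_m\neq 0$, decompose $y=y_0+w$ where $y_0:=\sum_{m\in F}\lambda_m y_m\in\spa(X_0)$ and
\[
 w:=\sum_{m\in F}\frac{\lambda_m}{m}v_m=\sum_{i\in J}\frac{c_i}{2^i}u_i,\qquad J:=\bigcup_{m\in F}I_m,
\]
with $c_i=\lambda_{m(i)}/m(i)$ taking only finitely many values, all nonzero, as $i$ ranges over the infinite set $J$. By linearity $T^n y_0\to 0$, so it suffices to show $w$ is irregular; then $\liminf_n\norm{T^n y}=0$ and $\limsup_n\norm{T^n y}=\infty$ follow from $y=y_0+w$.

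The main obstacle is therefore to check that the two delicate estimates in the proof of Theorem~\ref{ly_lyc}, which were written for the specific coefficient sequence $(1/2^i)_{i\in I}$, still deliver $\liminf_n\norm{T^n w}=0$ and $\limsup_n\norm{T^n w}=\infty$ when the coefficients $c_i/2^i$ are used over the arbitrary infinite subset $J\subset I$. For the lim-inf bound, along $n_j$ the contribution of indices $i\le j$ with $i\in J$ is controlled by (a)' and that of $i>j$ with $i\in J$ by the growth condition $2^i>2^j\norm{T}^{n_j}$; the finitely-valued multiplier $c_i$ only adds a harmless constant. For the lim-sup bound, one takes $j\in J$ (possible since $J$ is infinite): the dominant term $|c_j|/2^j\cdot\norm{T^{m_j}u_j}>|c_j|(3/2)^j M_{j-1}$ still tends to $\infty$ because $|c_j|$ is bounded below by $\min_{m\in F}|\lambda_m|/m>0$, while the tail estimates from the original proof continue to apply unchanged. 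Once these two estimates are checked, $Y$ is a dense irregular manifold and the theorem follows.
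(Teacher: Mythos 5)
Your proposal is correct and follows essentially the paper's own route: the paper proves Theorem~\ref{dly} precisely by the adaptation you carry out, namely rerunning the dense-manifold construction from the proof of Theorem~\ref{DDC} (dense sequence $(y_m)$, disjointly supported $0$--$1$ sequences, perturbations $z_m=y_m+\frac1m v_m$) with the irregular-vector series from the proof of Theorem~\ref{ly_lyc} in place of the distributionally irregular one, and your verification that the two key estimates survive bounded, bounded-below coefficients over an infinite subset $J\subset I$ is exactly the detail the paper leaves implicit.
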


A class of operators for which Theorems~\ref{DDC} and \ref{dly} are particularly
interesting and easy to apply is the class of operators with dense generalized kernel.

\begin{corollary}\label{dgk}
Let $T:X\to X$ be an operator whose generalized kernel $\bigcup_n
\ker T^n$ is dense in $X$.
\begin{enumerate}
\item[(i)]
If $T$ has a distributionally irregular vector, then $T$ admits a
dense distributionally irregular manifold.

\item[(ii)] If $\sup_n\norm{T^n}=\infty$, then  $T$ has a dense irregular manifold.
\end{enumerate}
\end{corollary}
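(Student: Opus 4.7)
The plan is to apply Theorems~\ref{DDC} and \ref{dly} directly, with the dense subset $X_0$ taken to be the generalized kernel itself. The only thing that needs verification is that vectors in the generalized kernel have orbits converging to $0$, which is immediate: if $x \in \ker T^n$ for some $n \in \N$, then $T^k x = 0$ for every $k \geq n$, so in particular $\lim_{k\to\infty} T^k x = 0$. Setting $X_0 := \bigcup_n \ker T^n$ therefore produces a dense subset on which the iterates of $T$ converge to zero.

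For part (i), I would then invoke Theorem~\ref{DDC} with this choice of $X_0$: since $T$ is assumed to admit a distributionally irregular vector and $\lim_k T^k x = 0$ on a dense set, the theorem yields a dense distributionally irregular manifold for $T$.

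For part (ii), I would invoke Theorem~\ref{dly} with the same $X_0$: the hypothesis $\sup_n \norm{T^n} = \infty$ together with the density of $X_0$ and pointwise convergence to zero on $X_0$ gives, by that theorem, a dense irregular manifold for $T$.

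There is essentially no obstacle: the corollary is a clean specialization of the previous two theorems to operators with dense generalized kernel, and the verification that $X_0$ satisfies the required orbit-convergence hypothesis is a one-line observation.
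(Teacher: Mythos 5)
Your proposal is correct and is exactly the argument the paper intends: take $X_0=\bigcup_n\ker T^n$, observe that orbits of vectors in the generalized kernel are eventually zero, and apply Theorem~\ref{DDC} for part (i) and Theorem~\ref{dly} for part (ii). Nothing further is needed.
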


To apply Corollary~\ref{dgk} to an important class of operators, we
will consider weighted backward shifts $B_w:X\to X$, $(x_0,x_1,\dots )\mapsto(w_1x_1,w_2x_2,\dots)$ on
$ X=\ell^p$, $1\leq p<\infty$,  or $X=c_0$,
where $w=(w_i)_i$ is a bounded sequence of non-zero weights, and the
(unweighted) backward shift $B:X\to X$, $(x_0,x_1,\dots)\mapsto(x_1,x_2,\dots)$
on the weighted spaces
\begin{align*}
 X
 &=\ell^p(v):=
 \left\{x=(x_i)_i \ ; \ \norm{x}:=\left(\sum_i v_i\abs{x_i}^p\right)^{1/p}<\infty\right\},\
 1\leq p<\infty , \mbox{ or}
 \\
 X
 &=
 c_0(v):=\{x=(x_i)_i \ ; \ \lim_i v_ix_i=0, \ \norm{x}:=\sup_iv_i\abs{x_i} \},
\end{align*}
 where $v=(v_i)_i$ is a sequence of strictly positive
weights such that $\sup_i v_i/v_{i+1}<\infty$ in order to have a
well-defined backward shift bounded operator.

\begin{proposition}\label{wbs}
The following assertions hold:
\begin{enumerate}
\item[(i)]
$B$ is Li-Yorke chaotic if and only if $M_v:=\sup\{v_n/v_m;\ n\in
\N, m>n\}=\infty$. In this case, $B$ admits a dense irregular
manifold.

\item[(ii)]
$B_w$ is Li-Yorke chaotic if and only if
$M_w:=\sup\left\{\prod_{k=n}^m \abs{w_k};\ n\in \N,
m>n\right\}=\infty$. In this case, $B_w$ admits a dense irregular
manifold.
\end{enumerate}
\end{proposition}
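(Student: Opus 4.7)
The plan is to deduce both parts from Corollary~\ref{dgk}(ii), for which the decisive structural fact is that in all four settings the generalized kernel is dense. Indeed, $B^{n+1}e_n = 0$ and $B_w^{n+1}e_n = 0$ (since the $w_k$ are non-zero), so $\bigcup_n \ker B^n$ and $\bigcup_n \ker B_w^n$ both contain $\spa\{e_n : n \geq 0\}$, which is dense in every one of the spaces $\ell^p(v)$, $c_0(v)$, $\ell^p$, and $c_0$. Thus, once we verify that $\sup_n \norm{B^n} = \infty$ (respectively $\sup_n \norm{B_w^n} = \infty$), Corollary~\ref{dgk}(ii) simultaneously yields Li-Yorke chaos \emph{and} the dense irregular manifold. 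Conversely, if $\sup_n \norm{T^n} < \infty$ then every orbit is bounded, so no irregular vector can exist, and Theorem~\ref{t_ly_irr} excludes Li-Yorke chaos.

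The whole proposition therefore reduces to identifying $\sup_n \norm{B^n}$ with $M_v$ and $\sup_n \norm{B_w^n}$ with $M_w$, up to constants that do not affect finiteness. For part (i), applying $B^k$ to a generic $x=\sum_j x_j e_j \in \ell^p(v)$ gives
\[
 \norm{B^kx}^p = \sum_{j \geq k} v_{j-k}\abs{x_j}^p
 \leq \Bigl(\sup_{j \geq k}\frac{v_{j-k}}{v_j}\Bigr)\norm{x}^p,
\]
with equality realized on the normalized basis vectors, so $\norm{B^k}^p = \sup_{j \geq k} v_{j-k}/v_j$; the same formula (without the exponent $p$) holds on $c_0(v)$. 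Taking the sup over $k$ yields $\sup\{v_n/v_m : 0 \leq n \leq m\}$, which is infinite precisely when $M_v = \infty$ (the diagonal case $n=m$ merely contributes the factor $1$).

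Part (ii) is entirely analogous. The computation $B_w^k e_n = \bigl(\prod_{i=n-k+1}^n w_i\bigr) e_{n-k}$ for $k \leq n$ gives
\[
 \norm{B_w^k} = \sup_{n \geq k} \prod_{i=n-k+1}^n \abs{w_i}
\]
on either $\ell^p$ or $c_0$. Taking the sup over $k \geq 0$ recovers $M_w$, up to the length-one products $\sup_n\abs{w_n} = \norm{B_w} < \infty$, so $\sup_n \norm{B_w^n} = \infty \iff M_w = \infty$. The same two-sided argument as in part (i) then closes the proof.

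There is no real obstacle: once the operator norms have been pinned down via the basis, everything is a direct appeal to Corollary~\ref{dgk}(ii) and Theorem~\ref{t_ly_irr}. The only point needing mild care is checking that the combinatorial quantities $M_v$ and $M_w$, which exclude the trivial diagonal or length-one terms, still agree with the operator-norm suprema up to bounded constants, so that the equivalences ``infinite iff infinite'' are not broken.
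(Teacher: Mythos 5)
Your proof is correct and follows the same overall strategy as the paper: necessity comes from the boundedness estimate $\norm{B^nx}^p\le M_v\norm{x}^p$ (no irregular vector, hence no Li-Yorke chaos by Theorem~\ref{t_ly_irr}), and sufficiency feeds the dense generalized kernel together with $\sup_n\norm{B^n}=\infty$ into Corollary~\ref{dgk}(ii), which gives both the dense irregular manifold and Li-Yorke chaos at once. The differences are only in execution. Where you identify $\sup_n\norm{B^n}$ with $M_v$ exactly by testing on the basis vectors, the paper instead exhibits a concrete vector with unbounded orbit: it picks pairs $n_k<m_k$ with $v_{n_k}/v_{m_k}>3^k$, uses the continuity condition $\sup_i v_i/v_{i+1}<\infty$ to guarantee $m_k-n_k\to\infty$, and builds a single $x\in\ell^p(v)$ with $\norm{B^{m_k-n_k}x}\ge(3/2)^k$; this is just another way of verifying $\sup_n\norm{B^n}=\infty$, and your exact operator-norm formula makes that step cleaner and renders the stretching of the gaps $m_k-n_k$ unnecessary. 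For part (ii) the paper reduces to part (i) by conjugating $B_w$ with a suitable diagonal operator, while you recompute $\norm{B_w^k}$ directly; both are routine. Your closing remark about the diagonal and length-one terms (bounded by $1$ and by $\norm{B_w}$ respectively) is precisely the only point where $M_v$, $M_w$ and the norm suprema could differ, so the ``infinite iff infinite'' equivalences are safe.
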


\begin{proof}
Suppose that $B:\ell^p(v)\to \ell^p(v)$ is Li-Yorke chaotic, and let
$x=(x_i)_i$ be an irregular vector for $B$.
Since
\[
 \norm{B^nx}^p=\sum_{k=0}^\infty v_k\abs{x_{k+n}}^p=\sum_{k=0}^\infty
 \left( \frac{v_k}{v_{k+n}}\right) v_{k+n}\abs{x_{k+n}}^p\leq M_v\norm{x}^p,
\]
then $M_v<\infty$ implies that $\sup_n\norm{B^nx}<\infty$, which is a contradiction.
Conversely, if $M_v=\infty$, we find increasing sequences of integers $(n_k)_k$, $(m_k)_k$,
$n_k<m_k$, $k\in \N$, such that $\lim_k (m_k-n_k)=\infty$ and $ v_{n_k}/v_{m_k}>3^k$, $k\in \N$,
by the definition of $M_v$ and the condition on $v$ for continuity of $B$.
We define the vector $x=(x_i)_i\in \ell^p (v)$ by $x_i=(2^kv_{m_k})^{-1/p}$ if $i=m_k$, and
$x_i=0$ otherwise. We have
\[
\norm{B^{m_k-n_k}x} \geq v_{n_k} \abs{x_{m_k}}^p=\left( \frac{v_{n_k}}{v_{m_k}}\right)
\frac{1}{2^k} > \left( \frac{3}{2} \right) ^k \ \ \underset{k\to \infty}{\longrightarrow} +\infty .
\]
This shows that $B$ is Li-Yorke chaotic, and it admits a
dense linear manifold $Y\subset X$ such that $Y$ is a
scrambled set for $B$ by Corollary~\ref{dgk}. The case $X=c_0$ is analogous.

(ii) is a consequence of (i) if we proceed by conjugation with a suitable
diagonal operator (see, e.g., \cite{MGP02}).
\end{proof}

Another consequence of Theorem~\ref{DDC} is the following easy sufficient condition
for dense distributional chaos.

\begin{corollary}\label{orbit_to_infty}
If an operator $T:X\to X$ satisfies that
\begin{enumerate}
\item there exist an increasing sequence of integers $B=(m_k)_k$ with $\udens (B)=1$,  $y\in X$
satisfying $\lim_{k\to \infty} \norm{T^{m_k}y}=\infty$, and
\item a dense subset $X_{0}\subset X$ such that
$\lim_{n\rightarrow\infty} T^{n}x=0$, for each $ x\in
X_{0}$,
\end{enumerate}
then  $T$ admits a dense distributionally irregular manifold.
\end{corollary}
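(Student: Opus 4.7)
The plan is to reduce the statement to Theorem~\ref{DDC} via Corollary~\ref{Cor2}. Hypothesis~(2) is precisely the ``dense subset with orbits tending to zero'' assumption needed by both results, so it will suffice to exhibit a single distributionally irregular vector for $T$; Theorem~\ref{DDC} will then automatically upgrade that vector to a dense distributionally irregular manifold. To obtain such a vector I would verify condition~(iii) of Corollary~\ref{Cor2} using the single vector $y$ supplied by hypothesis~(1).

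Concretely, set $y_m := y$ for every $m\in\N$. Since $\lim_k \norm{T^{m_k}y}=\infty$, for each $m$ there exists $K_m\in\N$ with $\norm{T^{m_k}y}\ge m\norm{y}$ whenever $k\ge K_m$. Writing $E_m:=\{i\in\N \,:\, \norm{T^iy}\ge m\norm{y}\}$, this inclusion reads $\{m_k : k\ge K_m\}\subset E_m$. Now I exploit $\udens(B)=1$: by definition, the inequality $\card(B\cap[1,N])/N > 1-1/(2m)$ holds for infinitely many $N$, so I may pick a strictly increasing sequence $(N_m)_m$ satisfying simultaneously
\[
 \frac{\card(B\cap[1,N_m])}{N_m} > 1-\frac{1}{2m}
 \quad\text{and}\quad
 N_m > 2mK_m.
\]
Then, since $B\cap[1,N_m]$ contains $E_m\cap[1,N_m]$ up to at most $K_m$ exceptions,
\[
 \frac{\card\{0\le i<N_m : \norm{T^iy_m}\ge m\norm{y_m}\}}{N_m}
 \ge \frac{\card(B\cap[1,N_m])-K_m}{N_m}
 > 1-\frac{1}{m},
\]
which tends to $1$ as $m\to\infty$. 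This is exactly condition~(iii) of Corollary~\ref{Cor2}.

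Applying Corollary~\ref{Cor2} (with $D:=X_0$) yields that $T$ admits a distributionally irregular vector, and then Theorem~\ref{DDC}, with the same dense set $X_0$, produces a dense distributionally irregular manifold, which is the desired conclusion. No serious obstacle arises in this proof: the argument is entirely modular given the previously established machinery, and the only genuine piece of reasoning is the elementary combinatorial step that combines $\udens(B)=1$ with the unboundedness of $\norm{T^{m_k}y}$ along $B$ into the quantitative density-one statement required by Corollary~\ref{Cor2}(iii).
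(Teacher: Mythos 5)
Your argument is correct and matches the paper's intended route: the corollary is stated there as a direct consequence of Theorem~\ref{DDC}, obtained exactly as you do by verifying condition (iii) of Corollary~\ref{Cor2} with the constant sequence $y_m=y$ (using $\udens(B)=1$ together with $\lim_k\norm{T^{m_k}y}=\infty$) and then feeding the resulting distributionally irregular vector into Theorem~\ref{DDC} with the same dense set $X_0$. The only quibble is a harmless off-by-one between counting $\{0\le i<N_m\}$ and $B\cap[1,N_m]$, which your choice of $N_m$ absorbs in the limit.
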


A this point we want to recall recent result of M\"{u}ller and Vrsovsky.

\begin{theorem}[\cite{MV09}]\label{MV_orbit_to_infty}
Let $T_n:X\to X$, $n\in \N$, be a sequence of operators. Suppose that one of the following conditions
is satisfied:
\begin{enumerate}
\item[(i)] either $\sum_{n=1}^\infty \frac{1}{\norm{T_n}} <\infty ;$

\item[(ii)] or $X$ is a complex Hilbert space and
$\sum_{n=1}^\infty \frac{1}{\norm{T_n}^2} <\infty .$
\end{enumerate}
Then there exists a point $y\in X$ such that $\lim_{n\to \infty} \norm{T_ny}=\infty$.
\end{theorem}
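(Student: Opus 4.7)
The plan is to construct $y$ as a convergent series $y=\sum_n c_n u_n$ (or a randomised analogue) where each $u_n$ is a unit vector with $\norm{T_nu_n}\ge\norm{T_n}/2$; such a $u_n$ exists by the definition of the operator norm, and either hypothesis forces $\norm{T_n}\to\infty$. A standard enhancement-of-convergence lemma (given a positive summable sequence $a_n$, the sequence $a_n/\sqrt{\sum_{k\ge n}a_k}$ is still summable but dominates $a_n$ by a factor tending to $\infty$) lets us pick $c_n>0$ absolutely summable in case~(i), or square-summable in case~(ii), with $c_n\norm{T_n}\to\infty$. The resulting expansion is
\[
 T_m y=c_m T_m u_m+\sum_{n\ne m}c_n T_m u_n ,
\]
whose main term has norm at least $c_m\norm{T_m}/2\to\infty$, while the cross terms can be as large as $\norm{T_m}\sum_{n\ne m}\abs{c_n}$. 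The key difficulty in both cases is to prevent this interference from swamping the main contribution.

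For the Hilbert case (ii) the interference is controlled in mean by randomisation: introduce independent unimodular scalars $\varepsilon_n$ uniformly distributed on $\T$ and set $y=\sum_n c_n\varepsilon_n u_n$. Square-summability of $(c_n)$ ensures almost sure convergence in the Hilbert space (Kolmogorov three-series theorem), and orthogonality of the cross phases gives
\[
 \mathbb{E}\norm{T_m y}^2=\sum_n c_n^2\norm{T_m u_n}^2\ge \tfrac14 c_m^2\norm{T_m}^2\longrightarrow\infty .
\]
A fourth-moment bound of Khintchine--Kahane type yields $\mathbb{E}\norm{T_m y}^4\le C(\mathbb{E}\norm{T_m y}^2)^2$, whence $\norm{T_m y}^2$ is concentrated around its mean. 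Splitting $\N$ into successive blocks and exploiting independence of the phases across blocks, one deduces via Borel--Cantelli that $\norm{T_m y}\to\infty$ almost surely, and in particular for some concrete realisation.

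For the Banach case (i) a deterministic inductive construction is used: write $y=\sum_k z_k$ with $\norm{z_k}\le c/\norm{T_{n_k}}$, so that the series is absolutely convergent by the summability hypothesis, and at step $k$ the direction and modulus of $z_k$ (a scalar multiple of some $u_{n_k}$) are chosen to raise $\norm{T_{n_k}(z_1+\cdots+z_k)}$ above a prescribed threshold $M_k\to\infty$, while the smallness of $\norm{z_k}$ ensures that the previously secured values $\norm{T_n(z_1+\cdots+z_{k-1})}$ for $n<n_k$ are only negligibly disturbed. The phase of $z_k$ is picked so that $T_{n_k}z_k$ does not cancel the image of the current partial sum, overcoming accumulated interference; intermediate indices $n$ are handled by inserting analogous substeps. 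The central obstacle is precisely this balancing of sizes, which is made feasible by $\sum 1/\norm{T_n}<\infty$: it provides enough budget for the series to converge while leaving each stage with a margin large enough to boost $\norm{T_{n_k}y}$ unboundedly.
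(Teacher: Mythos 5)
Two preliminary remarks. First, the paper contains no proof of this statement: it is quoted verbatim from M\"{u}ller and Vr\v{s}ovsk\'{y} \cite{MV09}, so the comparison must be with their argument. Their proof reduces to functionals (pick $f_n\in X^*$, $\norm{f_n}=1$, with $\norm{T_n^*f_n}\ge \norm{T_n}/2$, so that $\norm{T_nx}\ge \abs{(T_n^*f_n)(x)}$) and then applies Ball's plank theorems: the real plank theorem (a point of the unit ball with $\abs{g_n(x)}\ge t_n\norm{g_n}$ for all $n$ whenever $\sum_n t_n<1$) in case (i), and Ball's \emph{complex} plank theorem (which only needs $\sum_n t_n^2\le 1$) in case (ii), combined with exactly your enhancement-of-summability lemma to let the thresholds $t_n\norm{g_n}$ tend to infinity. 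This is precisely why (ii) is stated for complex Hilbert spaces. Your sketch correctly isolates the interference problem and the enhancement lemma, but neither of your two mechanisms actually resolves the interference, and this is not a technicality one can wave through.

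In case (ii), the fourth-moment bound $\mathbb{E}\norm{T_my}^4\le C\,(\mathbb{E}\norm{T_my}^2)^2$ with $C>1$ gives only a Paley--Zygmund estimate: $\norm{T_my}^2$ exceeds a fixed fraction of its mean with probability bounded below by a \emph{constant}; it gives no concentration, the failure probabilities are not summable, and Borel--Cantelli does not apply. Worse, the randomised construction itself fails: on complex $\ell^2$ with orthonormal basis $(f_k)_k$ take $T_{2k-1}x=T_{2k}x=k\langle x,f_k\rangle f_k$, so $\sum_n\norm{T_n}^{-2}<\infty$ and the norming vectors are $u_{2k-1}=u_{2k}=f_k$. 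Then $\norm{T_{2k}y}=k\,\abs{c_{2k-1}\varepsilon_{2k-1}+c_{2k}\varepsilon_{2k}}$, the relevant phases are independent across $k$, and with $c_{2k-1}=c_{2k}=:\gamma_k$ (near-equal weights behave the same) one gets $\mathbb{P}(\norm{T_{2k}y}\le 1)\ge \frac{1}{\pi k\gamma_k}$ eventually; since $\sum_k\gamma_k^2<\infty$ forces $\sum_k (k\gamma_k)^{-1}=\infty$ (H\"older), the second Borel--Cantelli lemma yields $\liminf_m\norm{T_my}\le 1$ almost surely, so no good realisation can be extracted --- while the deterministic aligned vector $y=\sum_k\gamma_kf_k$ works. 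Randomising the phases destroys exactly the alignment you need; producing the aligned vector in general is what the complex plank theorem does. In case (i) the gap is the \emph{future} interference, which you dismiss: if $\norm{z_k}=\delta_k$, the value you can secure for $T_{n_k}$ at step $k$ is at most $\delta_k\norm{T_{n_k}}$, whereas the terms added afterwards can move $\norm{T_{n_k}y}$ by as much as $\norm{T_{n_k}}\sum_{j>k}\delta_j$, and summability of $(\delta_j)_j$ together with $\delta_j\norm{T_{n_j}}\to\infty$ in no way forces $\sum_{j>k}\delta_j\lesssim\delta_k$ (with $\norm{T_n}=n^2$ and $\delta_n\approx n^{-3/2}$ the secured value is $\approx n^{1/2}$ but the later terms may contribute $\approx n^{3/2}$). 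Choosing signs or phases controls interference with the past partial sum only, never with the future, so the induction does not close; this is exactly the point where the plank theorem (or an equally non-trivial substitute) is required.
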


Corollary~\ref{orbit_to_infty} and Theorem~\ref{MV_orbit_to_infty} combine nicely to obtain a powerful
sufficient condition for dense distributional chaos.

\begin{corollary}\label{MV_ddc}
Let $T:X\to X$ be an operator such  that
 there exist a dense subset $X_{0}\subset X$ with
$\lim_{n\rightarrow\infty} T^{n}x=0$, for each $ x\in
X_{0}$, and an increasing sequence of integers $B=(m_k)_k$ with $\udens (B)=1$ satisfying
that
\begin{enumerate}
\item[(i)] either
$\sum_{k=1}^\infty \frac{1}{\norm{T^{m_k}}} <\infty ;$

\item[(ii)] or $X$ is a complex Hilbert space and
$\sum_{k=1}^\infty \frac{1}{\norm{T^{m_k}}^2} <\infty .$
\end{enumerate}
Then  $T$ has  a dense distributionally irregular manifold.
\end{corollary}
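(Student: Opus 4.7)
The plan is to reduce this to a direct composition of two earlier results: the Müller--Vrsovsky theorem (Theorem~\ref{MV_orbit_to_infty}) and Corollary~\ref{orbit_to_infty}. The only observation needed is that the summability conditions (i) and (ii) in the statement are tailored so as to match exactly the hypotheses of Theorem~\ref{MV_orbit_to_infty} when applied to the specific sequence of operators $T_k:=T^{m_k}$.

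First I would set $T_k=T^{m_k}$ for $k\in\N$ and apply Theorem~\ref{MV_orbit_to_infty} to this sequence. In case (i) the condition $\sum_k 1/\norm{T^{m_k}}<\infty$ is precisely hypothesis (i) of that theorem for $(T_k)_k$; in case (ii) the hypothesis $X$ is a complex Hilbert space together with $\sum_k 1/\norm{T^{m_k}}^2<\infty$ is precisely hypothesis (ii). In either case the theorem yields a vector $y\in X$ with
\[
 \lim_{k\to\infty}\norm{T^{m_k}y}=\lim_{k\to\infty}\norm{T_ky}=\infty.
\]

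Now I would invoke Corollary~\ref{orbit_to_infty} with this $y$, the same sequence $B=(m_k)_k$ (which by hypothesis satisfies $\udens(B)=1$), and the given dense subset $X_0$ on which $T^n$-orbits converge to $0$. Both hypotheses (1) and (2) of Corollary~\ref{orbit_to_infty} are then satisfied verbatim, so its conclusion provides a dense distributionally irregular manifold for $T$, which is exactly what we need.

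There is no serious obstacle to overcome here: the whole content of the statement is packaged in the earlier results, and the proof amounts to noting the correct specialization ($T_k=T^{m_k}$) that links them. The only thing to be slightly careful about is making explicit that the index set $B$ on which $\norm{T^{m_k}y}\to\infty$ (arising from Müller--Vrsovsky applied to $T^{m_k}$) is the same sequence $B$ with upper density one that is required by Corollary~\ref{orbit_to_infty}; but this is automatic by construction.
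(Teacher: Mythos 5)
Your argument is correct and is exactly the paper's intended route: the paper itself states that Corollary~\ref{orbit_to_infty} and Theorem~\ref{MV_orbit_to_infty} ``combine nicely'' to give this result, and your specialization $T_k:=T^{m_k}$ followed by an application of Corollary~\ref{orbit_to_infty} with the same density-one sequence $B$ and the dense set $X_0$ is precisely that combination.
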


As consequence of Corollary~\ref{MV_ddc}, we obtain the following
property.

\begin{corollary}\label{DDCC}
Let $T:X\rightarrow X$ be an operator. If there exist a dense set
$X_{0}$ such that $\lim_{n\rightarrow \infty} T^{n}x=0$, for each $
x\in X_{0}$ and $r(T)>1$, then $T$ admits a dense distributionally
irregular manifold.
\end{corollary}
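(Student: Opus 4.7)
The plan is to apply Corollary~\ref{MV_ddc} directly, so the task reduces to producing, from the hypothesis $r(T)>1$, an increasing sequence of integers $B=(m_k)_k$ with $\udens(B)=1$ and $\sum_k 1/\norm{T^{m_k}}<\infty$. The dense set $X_{0}$ with orbits tending to $0$ is already given, so once such a $B$ is constructed, the Corollary immediately delivers the dense distributionally irregular manifold.

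The single tool I need is Gelfand's spectral radius formula $r(T)=\lim_n\norm{T^n}^{1/n}$. Since $r(T)>1$, I would choose any $s$ with $1<s<r(T)$; then by Gelfand's formula there exists $N\in\N$ such that $\norm{T^n}\geq s^n$ for every $n\geq N$. Taking $m_k:=N+k-1$, the sequence $B=\{N,N+1,N+2,\dots\}$ is increasing and has $\udens(B)=1$ (in fact its lower density is also $1$). Moreover,
\[
 \sum_{k=1}^\infty \frac{1}{\norm{T^{m_k}}}
 \leq \sum_{n\geq N} s^{-n} < \infty,
\]
so condition (i) of Corollary~\ref{MV_ddc} is fulfilled and the result follows.

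I do not anticipate any real obstacle: the whole reduction is one invocation of Gelfand's formula followed by a geometric-series estimate. The only subtlety worth noting is that Gelfand's formula yields a genuine limit, not merely a $\limsup$, which is what allows the inequality $\norm{T^n}\geq s^n$ to hold \emph{eventually} (rather than only infinitely often) and therefore permits $B$ to be chosen of full density, as the hypothesis of Corollary~\ref{MV_ddc} demands.
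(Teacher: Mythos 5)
Your proposal is correct and is exactly the route the paper intends: the corollary is stated as a direct consequence of Corollary~\ref{MV_ddc}, with the full-density sequence supplied by the spectral radius condition via a geometric-series estimate (one could even skip the ``eventually'' step, since $r(T)=\inf_n\norm{T^n}^{1/n}$ gives $\norm{T^n}\geq r(T)^n$ for every $n$).
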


\begin{definition}[\cite{CD78}]
Let $H$ be a complex separable Hilbert space. For $\Omega$ a
connected open subset of $\C$ and $n$ a positive integer, let
${\textrm{B}}_{n}(\Omega)$ denote the set of  operators $T$ in $H$
which satisfy
\begin{enumerate}
\item[(a)] $\Omega \subset \sigma (T)$,

\item[(b)] $\rank (T-wI) = H$ for $w\in \Omega$,

\item[(c)] $\bigvee \ker _{ w\in \Omega}(T-wI) =H$,

\item[(d)] $\dim \ker (T-wI) =n$ for $w\in \Omega$.
\end{enumerate}
\end{definition}

\begin{corollary}
Let $T\in {\textrm{B}}_{n}(\Omega)$. If $\Omega\cap\T\ne\emptyset$,
then $T$ has a dense distributionally irregular manifold.
\end{corollary}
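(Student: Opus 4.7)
The plan is to invoke Corollary~\ref{DDCC}, so I need to exhibit a dense subset of $H$ on which the orbits of $T$ tend to $0$, and to verify that $r(T)>1$. The spectral radius condition is immediate: since $\Omega$ is open and meets the unit circle $\T$, it contains points $w$ with $\abs{w}>1$, and condition~(a) in the definition of $\mathrm{B}_{n}(\Omega)$ gives $r(T)\geq\abs{w}>1$.

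For the dense subset, let $\Omega_{-}:=\Omega\cap\D$, which is a nonempty open subset of $\Omega$ for the same reason. The natural candidate is
\[
 X_{0} \ := \ \spa \bigcup_{w\in\Omega_{-}} \ker(T-wI),
\]
because any finite combination $x=\sum_{j}c_{j}x_{j}$ with $Tx_{j}=w_{j}x_{j}$ and $\abs{w_{j}}<1$ satisfies $T^{k}x=\sum_{j}c_{j}w_{j}^{k}x_{j}\to 0$. The main obstacle is showing that $X_{0}$ is dense: condition~(c) in the definition of $\mathrm{B}_{n}(\Omega)$ only asserts that the closed span of eigenvectors taken over the \emph{entire} set $\Omega$ is all of $H$, not over a proper open subset like $\Omega_{-}$. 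This gap is bridged by the Cowen-Douglas structure theorem from \cite{CD78}: locally on $\Omega$ there exist $n$ linearly independent holomorphic $H$-valued sections $e_{1},\dots,e_{n}$ with $\ker(T-wI)=\spa\{e_{1}(w),\dots,e_{n}(w)\}$. If some $x\in H$ were orthogonal to $X_{0}$, then near any chosen point of $\Omega_{-}$ the real-analytic functions $w\mapsto\langle e_{j}(w),x\rangle$ would vanish on an open set, hence identically on the local chart by the identity principle, and then on all of the connected open set $\Omega$ by analytic continuation; condition~(c) would then force $x=0$, so $X_{0}$ is indeed dense.

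With both hypotheses of Corollary~\ref{DDCC} verified, we conclude that $T$ admits a dense distributionally irregular manifold.
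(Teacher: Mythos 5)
Your proposal is correct and follows essentially the same route as the paper: take $X_{0}=\spa\bigcup_{w\in\Omega\cap\D}\ker(T-wI)$, note that orbits on $X_{0}$ tend to $0$ and that $r(T)>1$ because the open set $\Omega\subset\sigma(T)$ meets $\T$, and apply Corollary~\ref{DDCC}. The only difference is that you spell out, via local holomorphic frames and the identity principle, the density of $X_{0}$, which the paper simply asserts; this is a welcome addition rather than a deviation.
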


\begin{proof}
 Let $U$ be a
connected open subset of $\Omega\cap \D$. Then $X_{0}:= \spa
\bigcup_{ w\in U}\ker (T-wI)$ is dense in $H$ and satisfies that
$\lim_{n\rightarrow \infty} \|T^{n}x\|=0$,  for all $x\in X_{0}$ and
$r(T)>1$. Hence the conclusion follows by an application of
Corollary~\ref{DDCC}.
\end{proof}

\section{Existence of distributionally chaotic operators}

The study of general conditions under which a space $X$ admits operators
with certain wild behaviour has attracted the interest of many researchers.
Ansari \cite{Ans97} and Bernal-Gonz\'{a}lez
\cite{Ber99} independently proved that any separable infinite dimensional Banach space $X$
supports a hypercyclic operator.  This result was extended to Fr\'echet spaces
by Bonet and Peris \cite{BonetPeris98}. A generalization for polynomials is given in \cite{MGP09}.
Other existence results for related properties and for $C_0$-semigroups can be found, for instance, in
\cite{BBM03,BGE07,BBCP05,S08,S10}.

In this section we want to establish the existence of distributionally chaotic operators on
arbitrary infinite dimensional and separable Banach spaces. The following Lemma of Shkarin \cite{S08}
and its consequences will be key to obtain our existence result.

\begin{lemma}[\cite{S08}]\label{shk}
Let $X$ be a separable infinite dimensional Banach space, let
$(b_{k})_{k}$ be a sequence of numbers in
$[3,+\infty[$ such that $ b_{k}\rightarrow \infty$ as $k\rightarrow
\infty$ and $(N_{k})_{k}$ be a strictly increasing
sequence of positive integers such that $N_{0} =0$ and $N_{k+1}-
N_{k}\ge 2$ for each $k\in\N$. Then there exists a
biorthogonal sequence $((y_{k}, f_{k}))_{k}$ in $X\times
X^* $ such that

(B1) $\|y_{k}\|=1$ for each $k\in\N$;

(B2) $\spa \{y_{k} :k\in\N\}$ is dense in $X$;

(B3) $\|f_{n_{k}}\| \le b_{k}$ for each $k\in\N$;

(B4) $\|f_{j}\|\le 3 $ if $j\in\N\setminus\{N_{k}: k\in\N\}$;

(B5) for any $k\in\Z$ and any $ c_{j} \in \K $ with  $ N_{k} +1 \le j \le N_{k+1}-1$
\[
 \frac{1}{2} \norm{ \sum _{j = N_{k} +1}^{N_{k+1}-1} c_{j}y_{j}} \le
 \left( \sum _{j = N_{k} +1}^{N_{k+1}-1}
 |c_{j}|^2\right)^{\frac{1}{2}}
 \le
 2\norm{ \sum _{j = N_{k}+1}^{N_{k+1}-1} c_{j}y_{j}}
\]
\end{lemma}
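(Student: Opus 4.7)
The plan is to construct the biorthogonal system $(y_k,f_k)_k$ inductively, block by block, where the $k$-th block corresponds to indices $N_k+1 \le j \le N_{k+1}-1$, and the ``separator'' indices $N_k$ are treated specially. The key ingredient is Dvoretzky's theorem, which guarantees that any infinite dimensional Banach space contains, for every $d\in \N$ and $\eps>0$, a $d$-dimensional subspace that is $(1+\eps)$-isomorphic to $\ell_2^d$. With $\eps$ small enough, applied to $d = N_{k+1}-N_k-1$, this yields a subspace $E_k\subset X$ with a normalized basis satisfying condition (B5) with constant $2$. The block vectors $y_{N_k+1},\dots ,y_{N_{k+1}-1}$ will be chosen from $E_k$, while the vector $y_{N_k}$ will be used to absorb the density requirement (B2).

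To organize the induction, fix a dense sequence $(x_n)_n$ in $X$. At stage $k$, once $y_1,\dots ,y_{N_k-1}$ and $f_1,\dots ,f_{N_k-1}$ have been chosen, let $F_k := \bigcap_{j<N_k}\ker f_j$, a closed subspace of finite codimension. I would apply Dvoretzky's theorem inside $F_k$ to obtain the near-Euclidean subspace $E_k\subset F_k$, and pick the block vectors $y_{N_k+1},\dots ,y_{N_{k+1}-1}$ as the corresponding normalized basis. For the separator $y_{N_k}$, I would first decompose $x_k$ along the span already constructed plus $F_k$: if the component in $F_k$ is nonzero and linearly independent from $E_k$, normalize it to produce $y_{N_k}$; otherwise a small perturbation along a fresh direction in $F_k$ ensures both independence and that $x_k$ lies in $\spa\{y_1,\dots ,y_{N_k}\}\oplus E_k$, yielding (B2) in the limit. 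Condition (B1) is just the normalization.

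The functionals are then obtained by duality. Within each block $E_k$, the near-Euclidean structure provides biorthogonal functionals on $E_k$ of norm at most $2$; extending them by zero on the previously used basis vectors and by Hahn--Banach to $X^*$ costs at most another factor, and a careful arrangement (using that $E_k\subset F_k$) gives the bound $3$ required by (B4). For the separator index, the functional $f_{N_k}$ must vanish on every other $y_j$; since $y_{N_k}$ may have only a small component in the direction transverse to the rest of the system (proportional to the ``residual'' piece of $x_{k}$ that was normalized), the reciprocal of this component dictates the norm of $f_{N_k}$. This is where the hypothesis $b_k\to \infty$ is used: by enlarging the block sizes and perturbations at each stage if necessary, one arranges that the transverse component is at least $1/b_k$, giving (B3).

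The main obstacle is the simultaneous fulfillment of all five conditions, particularly reconciling the local Dvoretzky construction (which wants freedom to pick $E_k$ in any sufficiently large subspace) with the global density requirement (B2) and with maintaining biorthogonality against all previously constructed functionals. The standard device that resolves this tension is to carry out the Dvoretzky step inside the finite-codimensional kernel $F_k$, which automatically preserves past biorthogonality, and to isolate the density bookkeeping in a single ``transition'' coordinate per block where the larger norm bound $b_k$ absorbs the cost; the condition $N_{k+1}-N_k\ge 2$ precisely guarantees the room for both a separator and at least one genuine block vector at each stage.
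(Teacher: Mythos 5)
The paper itself does not prove this lemma: it is quoted verbatim from Shkarin \cite{S08}, so your attempt has to stand on its own (and against Shkarin's construction, which indeed combines Dvoretzky's theorem with an inductive biorthogonal construction, so your overall strategy is the right one in spirit). The first genuine gap is in your treatment of (B4). You claim that the coordinate functionals on $E_k$, ``extended by zero on the previously used basis vectors and by Hahn--Banach,'' cost ``at most another factor,'' and that $E_k\subset F_k=\bigcap_{j<N_k}\ker f_j$ gives the uniform bound $3$. This is not true as stated: if $v=u+e$ with $u$ in the old span and $e\in E_k$, the only control you get from $E_k\subset F_k$ is $|f_i(v)|=|f_i(u)|$, hence $\|e\|\le\bigl(1+\sum_{i<N_k}\|f_i\|\bigr)\|v\|$, and this sum diverges because it contains the separator functionals of norms up to $b_j$. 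The obstruction is real, not an artifact of the estimate: kernels of large-norm functionals contain unit vectors nearly parallel to the old basis vectors (e.g.\ $f_1=e_1^*+Me_3^*$ has $e_1-\tfrac1M e_3\in\ker f_1$), so the annihilating functional for a new vector chosen merely inside $F_k$ can be forced to have huge norm. The missing standard ingredient is to choose each new block (and the separator) not just in $F_k$ but also in the intersection of the kernels of finitely many \emph{norm-one} functionals that $(1+\eps)$-norm the finite-dimensional old span; this ``almost orthogonality'' makes the projection onto the new block along the old span have norm at most $2+\eps$, and only then does $(1+\eps)$-Euclidean coordinates plus Hahn--Banach give a bound below $3$.

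The second gap concerns the quantifiers in (B3) and the density bookkeeping. In the lemma the sequences $(b_k)_k$ and $(N_k)_k$ are \emph{given in advance} ($b_k\ge 3$ possibly tending to infinity arbitrarily slowly, and in the paper's application $N_k=(k+1)!+1$ is prescribed), so your device of ``enlarging the block sizes and perturbations at each stage if necessary'' is not available: you must produce $\|f_{N_k}\|\le b_k$ for the prescribed data. Concretely, $\|f_{N_k}\|$ is controlled by the reciprocal of the distance from $y_{N_k}$ to the span of all the other $y_j$'s, and if $y_{N_k}$ is simply the normalized residual of $x_k$ under the biorthogonal projection $Q_k$, that distance is only bounded below by $1/\|I-Q_k\|$, where $\|I-Q_k\|$ involves the accumulated norms $\sum_{j<k}b_j$ and can grow much faster than $b_k$. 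One must instead perturb the residual by a fresh, almost orthogonal direction of size calibrated against $b_k$, and then verify that the resulting approximation errors for the dense sequence can still be driven to $0$ (this trade-off between the lower bound $1/b_k$ and condition (B2) is the real content of the lemma, and is why $b_k\to\infty$ is needed at all). Your sketch gestures at this but does not carry out the quantitative scheduling; there is also a smaller, fixable ordering issue, namely that the block functionals must annihilate the same-block separator $y_{N_k}$ and $f_{N_k}$ must annihilate $E_k$, which forces a definite order of choices inside each block that your outline leaves ambiguous.
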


In Section~\ref{sec:SCDC-SP} we showed that there are no operators
satisfying the Strong   Distributional Chaos  Criterion on certain
Banach spaces. In the following result
we give a positive answer for distributionally chaotic operators.

\begin{theorem} In every infinite dimensional separable Banach space there exists a
hypercyclic and distributionally chaotic  operator which admits a
dense distributionally irregular manifold.
\end{theorem}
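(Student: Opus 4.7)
The plan is to construct, on an arbitrary separable infinite dimensional Banach space $X$, a bounded operator $T$ that simultaneously satisfies the Hypercyclicity Criterion and the hypothesis of Corollary~\ref{DDCC}, namely a dense subset of vectors with orbits tending to zero together with $r(T)>1$. The blueprint is Shkarin's construction from \cite{S08}: Lemma~\ref{shk} was designed precisely to produce hypercyclic operators on arbitrary separable Banach spaces, and the task is to refine the weights so that the same operator is also distributionally chaotic.

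First I would apply Lemma~\ref{shk} with a rapidly growing sequence $(b_{k})_{k}\subset[3,\infty)$ and a strictly increasing sequence $(N_{k})_{k}$ whose gaps $N_{k+1}-N_{k}$ tend to infinity sufficiently fast, obtaining a biorthogonal system $((y_{k},f_{k}))_{k}$ satisfying (B1)--(B5). Then I would define a weighted backward shift on this basis by
\[
 Tx=\sum_{k=0}^{\infty} w_{k+1}\, f_{k+1}(x)\, y_{k},
\]
so that $Ty_{0}=0$ and $Ty_{k}=w_{k}y_{k-1}$ for $k\geq 1$. The weights $(w_{k})_{k}$ must be small at the break indices $N_{k}$ (of order $1/b_{k}$) to absorb the large functional norms $\|f_{N_{k}}\|\leq b_{k}$ and keep $T$ bounded, but comparable to a constant $r>1$ throughout each interior block $[N_{k}+1,N_{k+1}-1]$. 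Boundedness of $T$ is checked by decomposing $x$ along blocks and applying (B3), (B4), and the near-$\ell^{2}$ block estimate (B5).

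With $T$ in hand I would verify three things. The set $X_{0}:=\spa\{y_{k}:k\in\N\}$ is dense by (B2), and since $T^{n}y_{k}=0$ for $n>k$, one has $\lim_{n}T^{n}x=0$ for every $x\in X_{0}$. The large within-block weights force $\|T^{n}\|\geq w_{N_{k+1}-1}\cdots w_{N_{k+1}-n}\approx r^{n}$ for $n=N_{k+1}-N_{k}-1$, so $r(T)\geq r>1$; Corollary~\ref{DDCC} then yields the dense distributionally irregular manifold. Finally, hypercyclicity follows from the Hypercyclicity Criterion applied to $X_{0}$, using the right-inverse branch $Sy_{k}=y_{k+1}/w_{k+1}$ along the subsequence $n_{k}=N_{k+1}-N_{k}-1$, for which $S^{n_{k}}y_{N_{k}}\to 0$ because the in-block products $w_{N_{k}+1}\cdots w_{N_{k+1}-1}$ are large while $\|y_{j}\|=1$.

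The main obstacle is the delicate design of the weights $(w_{k})_{k}$: the smallness $w_{N_{k}}\lesssim 1/b_{k}$ demanded by boundedness conflicts with the need that long products $w_{j}\cdots w_{j+\ell}$ grow exponentially, and both must be compatible with the right-inverse summability needed for the Hypercyclicity Criterion. The resolution relies on the two independent degrees of freedom granted by Lemma~\ref{shk}: one can prescribe $(b_{k})_{k}$ and $(N_{k+1}-N_{k})_{k}$ to grow arbitrarily fast, and choosing the block lengths $N_{k+1}-N_{k}$ vastly larger than $\log b_{k}$ allows the within-block expansion to dominate, so that all three requirements can be met simultaneously.
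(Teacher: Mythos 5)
Your construction cannot work as stated, and the obstruction is already contained in the paper. The operator you propose --- a backward shift on Shkarin's biorthogonal system with weights bounded below by some $r>1$ throughout each long block, small only at the break indices, and with $T^{n}y_{k}=0$ for $n>k$ --- would satisfy the Strong Distributional Chaos Criterion: taking $x_{m}=y_{N_{k+1}-1}$ for a block of length greater than $m$ gives $\|T^{i}x_{m}\|\ge r^{i}\|x_{m}\|$ for $i=1,\dots,m$ while the orbit of $x_{m}$ is eventually zero. But by Corollary~\ref{HI_SCDC} no operator on a hereditarily indecomposable Banach space satisfies the SDCC, so the operator you want to build simply does not exist on every separable infinite dimensional Banach space. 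Concretely, the failure is at the step you dismiss as "delicate weight design": Shkarin's lemma gives a bounded shift only when the weights are $\ell^{2}$-small (the paper quotes \cite[Lemma 2.5]{S08} with $(w_{n})_{n}\in\ell^{2}$), because $(y_{k},f_{k})$ is merely a biorthogonal system, not a basis. Estimate (B5) controls a single block, but for a general $x$ the block pieces of $Tx=\sum_{k}w_{k+1}f_{k+1}(x)y_{k}$ must be summed with no orthogonality across blocks, which forces essentially $\sum_{k}\bigl(\sum_{j\in\text{block }k}w_{j}^{2}\bigr)^{1/2}<\infty$; weights $\ge r>1$ on blocks of growing length make this divergent, and no choice of $b_{k}$ or of block lengths repairs it. The same issue undermines your spectral radius claim and hence the appeal to Corollary~\ref{DDCC}.

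The paper's route is genuinely different and is the missing idea: keep the weights small ($w_{k}=k^{-2/3}\in\ell^{2}$), so that $T$ is bounded and quasinilpotent, and pass to $I+T$. Exponential growth is impossible (indeed $r(I+T)=1$, so Corollary~\ref{DDCC} is not applicable), but the binomial expansion $(I+T)^{i}y_{j}=y_{j}+\binom{i}{1}w_{j-1}y_{j-1}+\cdots$ together with (B5) yields the polynomial lower bound $\|(I+T)^{i}y_{j}\|\ge\tfrac{1}{2}\,i\,w_{j-1}$ inside a block, which, with $N_{k}=(k+1)!+1$, is enough to verify the frequency condition (iii) of Corollary~\ref{Cor2} on a set of indices of density tending to $1$. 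The dense set of vectors with orbits tending to zero also cannot come from finite supports in your sense (note $(I+T)^{n}y_{0}=y_{0}$); it comes from Shkarin's result that $I+T$ satisfies the Kitai Criterion, which simultaneously gives hypercyclicity. Then Corollary~\ref{Cor2} and Theorem~\ref{DDC} produce the dense distributionally irregular manifold. In short: replace "exponential growth built into the shift" by "polynomial growth extracted from $(I+T)^{i}$ via binomial coefficients"; without that idea the proof does not go through.
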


\begin{proof}
By \cite[Lemma 2.5]{S08}, given a sequence $(w_{n})_{n}$ of
positive weights in $\ell^2$ there exists $T: X\rightarrow X$
satisfying that $Ty_{0} =0$ and $Ty_{n+1}=w_{n}y_{n}$ with
$\|y_{n}\| =1$ where $(y_{n})_n $ is given by Lemma~\ref{shk}.

We need to compute $\|(I+T)^i y_{j}\|$, that is
\begin{align*}
 \|(I+T)^i y_{j}\|&= \norm{\sum_{k=0}^{i} \binom{i}{k} T^k y_{j}} \\
 &=
 \norm{  y_{j}+ \binom{i}{1} T y_{j}+ \cdots +\binom{i}{i} T^i y_{j}} \\
 &=\norm{  y_{j}+ i w_{j-1}y_{j-1}+ \cdots +
 w_{j-1}\cdots w_{j-i} y_{j-i}}
\end{align*}

If $j = N_{k+1}-1$ and $j-i \ge N_{k} +1$, that is, $i\le N_{k+1} - N_{k}-2$,
then
\[
 \|(I+T)^i y_{j}\|\ge \frac{1}{2}i w_{j-1}
\]
by condition (B5) in Lemma~\ref{shk}. Taking
$w_{k}=k^{-\frac{2}{3}}$, $N_{0}:=0$ and $N_{k}=(k+1)!+1$ for $k\ge
1$,  then  for $i= 3k([((k+2)!)^\frac{2}{3}]+1), \cdots,
(k+2)!-(k+1)! -2$
\[
 \|(I+T)^i y_{j}\|\ge \frac{1}{2}i w_{j-1}\ge
 \frac{1}{2}\frac{3k([((k+2)!)^\frac{2}{3}]+1) )}{((k+2)!-1)^{\frac{2}{3}}}>
 k+1,
\]
for $j=(k+2)!=N_{k+1}-1$.
 Thus, for  $u_{m}= y_{N_{m+1}-1}$, $m\in \N$, we have
\begin{align*}
 \frac{1}{N_{m}} &
 \card  \{ 0\le i < N_{m}: \|(I+T)^{i}u_{m}\|
 \ge m\|u_{m}\|=m\} \\
 &
 \ge
 \frac{(m+1)!- m! -2 - (3( m-1)([((m+1)!)^\frac{2}{3}]+1)}
 {(m+1)!+1}
 \underset{m\to\infty}{\longrightarrow} 1.
\end{align*}
Moreover, $ I+T $ satisfies the Kitai Criterion (therefore, it is
hypercyclic) \cite[Theorem 1.1 and Corollary 1.3]{S08}, thus there
exists a dense sequence $(x_{k})_{k}$ in $X$ such that
$(I+T)^{n}x_{k} \rightarrow 0$ as $ n\rightarrow \infty $.

Hence the operator $I+T$ satisfies the conditions of  Corollary
\ref{Cor2} and    Theorem \ref{DDC}, and we conclude that $I+T$ has
a dense distributionally irregular manifold.
\end{proof}

\section{Acknowledgements }

The second author is supported in part by MEC and FEDER, Project MTM2008-05891.
The third and fourth authors are supported in part by MEC and FEDER, Projects
MTM2007-64222, MTM2010-14909 and by Generalitat Velenciana, Project PROMETEO/2008/101.

%\bibliographystyle{elsarticle-num}

%\bibliography{univ_hyp_chaos,math}

\end{document}